\newcommand{\R}{\mathbb{R}}
\newcommand{\Te}{\mathcal{T}}
\renewcommand{\S}[1]{\mathcal{S}_{#1}}
\newcommand{\M}[1]{\mathcal{M}_{#1}}
\newcommand{\F}[1]{\mathcal{F}_{#1}}
\newcommand{\mean}[1]{\operatorname{\mathbb{E}}#1}
\newcommand{\prob}[1]{\operatorname{\mathbb{P}}\left(#1\right)}
\newcommand{\sve}[1]{V(#1)}
\newcommand{\sveint}[1]{\overset{\circ}{V}(#1)}
\newcommand{\nve}[1]{n_\text{v}(#1)}
\newcommand{\nveint}[1]{\overset{\circ}{n}_\text{v}(#1)}
\newcommand{\sed}[1]{E(#1)}
\newcommand{\ned}[1]{n_\text{e}(#1)}
\newcommand{\sce}[1]{C(#1)}
\newcommand{\nce}[1]{n_\text{c}(#1)}
\newcommand{\sse}[1]{S(#1)}
\newcommand{\nse}[1]{n_\text{s}(#1)}
\newcommand{\nseint}[1]{\overset{\circ}{n}_\text{s}(#1)}
\newcommand{\nbseint}[1]{\overset{\circ}{n}_{\text{s,b}}\left(#1\right)}
\newcommand{\nnbseint}[1]{\overset{\circ}{n}_{\text{s,nb}}\left(#1\right)}
\newcommand{\nttl}[1]{n_{\text{t}}(#1)}
\newcommand{\st}{\text{s}} 
\newcommand{\mt}{\text{m}} 
\newcommand{\ft}{\text{f}} 
\newcommand{\PP}{P}
\newcommand{\TT}{\mathbf{T}}  
\newcommand{\cT}{\mathcal{T}} 
\newcommand{\CSplitSpace}{\mathcal{C}_{\text{s}}}
\newcommand{\CSplitMeas}{{C}_{\text{s}}}
\newcommand{\PSplit}{{P}_{\text{s}}}
\newcommand{\CFlipSpace}{\mathcal{C}_{\text{f}}}
\newcommand{\CFlipMeas}{{C}_{\text{f}}}
\newcommand{\PFlip}{{P}_{\text{f}}}
\newtheorem{theorem}{Theorem}
\newtheorem{proposition}{Proposition}
\newtheorem{definition}{Definition}
\newtheorem{corollary}{Corollary}
\newenvironment{proof}{\noindent{\bf Proof} }{\hfill $\square$ \\}
\newcounter{exampleno}
\newenvironment{example}{\refstepcounter{exampleno}\noindent{\bf Example \arabic{exampleno}} }{\hfill $\square$ \\}
\title{A completely random T-tessellation model and Gibbsian extensions}
\author{Kiên Kiêu\textsuperscript{\dag}, Katarzyna Adamczyk-Chauvat\textsuperscript{\dag}, Hervé Monod\textsuperscript{\dag},\\ Radu S.\ Stoica\textsuperscript{\ddag,\P}\vspace{0.5cm}\\
\textsuperscript{\dag}UR 341 Mathématiques et Informatique Appliquées, INRA\\ F78350 Jouy-en-Josas\\
\textsuperscript{\ddag}UMR 8524 Laboratoire Paul Painlevé, Université Lille 1\\ F59491 Villeneuve-d'Ascq\\
\textsuperscript{\P}Institut de mécanique céleste et de calcul des éphémérides,\\ Observatoire de Paris\\
F75014 Paris, France}
\begin{document}
\maketitle 
\begin{abstract}
  In their 1993 paper, Arak, Clifford and Surgailis discussed a new model of random planar graph. As a particular case, that model yields tessellations with only T-vertices (T-tessellations). Using a similar approach involving Poisson lines, a new model of random T-tessellations is proposed. Campbell measures, Papangelou kernels and Georgii-Nguyen-Zessin formulae are translated from point process theory to random T-tessellations. It is shown that the new model shows properties similar to the Poisson point process and can therefore be considered as a completely random T-tessellation. Gibbs variants are introduced leading to models of random T-tessellations where selected features are controlled. Gibbs random T-tessellations are expected to better represent observed tessellations. As numerical experiments are a key tool for investigating Gibbs models, we derive a simulation algorithm of the Metropolis-Hastings-Green family.
\end{abstract}

\tableofcontents

\section{Introduction}
\label{sec:intro}

Random tessellations are attractive mathematical objects, from both theoretical and practical points of view. The study of the mathematical properties of these objects is still leading to open problems, while the range of applications covers a broad panel of scientific domains such as astronomy, geophysics, image processing or environmental sciences~\cite{Lant02,MollStoy07,leber}.

When modelling real-world structures, one aims at flexible classes of random models able to represent a wide range of spatial patterns. Gibbsian point processes combined with Voronoï diagrams offer such an attractive approach. The class of Gibbs point processes is enriched continuously (hard-core, Strauss, area-interaction, Quermass-interaction). Random tessellations are obtained as Voronoï diagrams associated with the germs yielded by such Gibbs processes, see e.g. \cite{dereudre-2011}. Hence a large class of models for random tessellations is made available for applications. Our aim is to sketch a similar theoretical framework for other types of tessellations which are not germ-based. We will focus on \emph{T-tessellations} (tessellations with only T-vertices) which are naturally built using geometrical operators (splits, merges and flips) instead of a germ-based procedure.

Our approach is closely related to the one used by Arak, Clifford and Surgailis in their paper~\cite{ArakClifSurg93} on a random planar graph model. In particular, a key ingredient is the Poisson line process. As a first step, this paper introduces a new model of random T-tessellations which can be considered as a completely random model. Then Gibbs variations are proposed and a general algorithm for simulating them is described. 

Throughout the paper, we focus on the case where the domain of interest is bounded. Extension of Gibbs models for T-tessellations of the whole plane remains an open problem at this stage. 

Section \ref{sec:space} provides definitions, notations and basic results about T-tes\-sel\-la\-tions. The completely random T-tessellation model is discussed in Section~\ref{sec:random}. This section also introduces for arbitrary random T-tessellations Campbell measures, Papangelou kernels which are widely used in point process theory. Our new model can be considered as a T-tessellation analog to the Poisson point process. This claim is based on Georgii-Nguyen-Zessin type formulae. Therefore the T-tessellation model introduced in Section~\ref{sec:random} is referred to as a completely random T-tessellation. Gibbsian extensions are discussed in Section \ref{sec:gibbs:extension} together with some examples. One example is also a particular case of Arak-Clifford-Surgailis random graph model when its parameters are chosen in order to yield a T-tessellation. Georgii-Nguyen-Zessin formulae are provided for hereditary Gibbs models. In Section \ref{sec:simulation}, a simulation algorithm is proposed. The design of the algorithm follows the general principles of Metropolis-Hastings-Green algorithms, already widely used for Gibbs point processes. It involves three types of local operators: split, merge and flip. Conditions ensuring the convergence of the Markov chain to the target distribution are provided.

Below, as a notational convention, bold letters are used for denoting random variables. Detailed proofs of the main results are postponed in appendices.

\section{The space of T-tessellations}
\label{sec:space}

In this paper, we shall consider only bounded tessellations. The domain of interest is denoted by $D$. It is assumed to be compact. For sake of simplicity, it is supposed to be also convex and polygonal. Let $a(D)$, $l(D)$, $\ned{D}$ and $\nve{D}$ be respectively the area, the perimeter length, the numbers of sides and vertices of $D$.

A polygonal \emph{tessellation} of $D$ is a finite subdivision of $D$ into
polygonal sets (called \emph{cells}) with disjoint interiors. The
tessellation vertices are cell \emph{vertices}. \emph{Edges} are defined as line
segments contained in cell edges, ending at vertices and containing no other vertex between their ends. The whole set of tessellation edges can be
partitioned into maximal subsets of aligned and contiguous edges,
called \emph{segments}.

We shall focus on tessellations with only T-vertices. Such
tessellations are called here \emph{T-tessellations}. An example of
T-tessellation is shown in Figure~\ref{fig:T:tessellation}.

\begin{definition}
  A tessellation vertex is said to be a \emph{T-vertex} if it lies at the intersection
  of exactly three edges and if, among those three, two are aligned. 

  A polygonal tessellation of $D$ is called a \emph{T-tessellation} of $D$ if
  all its vertices, except those of $D$, are T-vertices and if it has no pair of
  distinct and aligned segments.
\end{definition}

\begin{figure}
  \centering
  \includegraphics{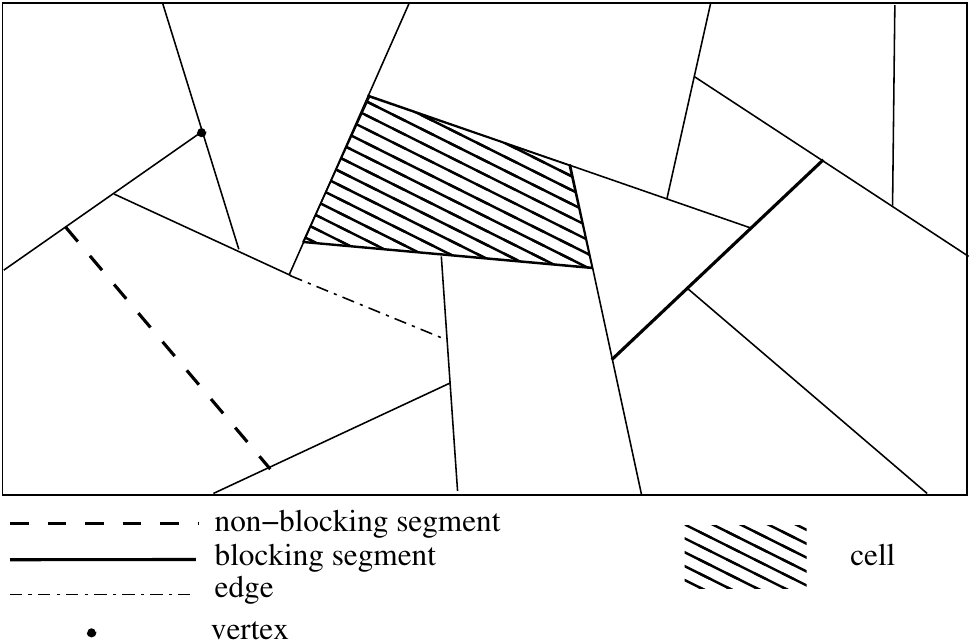}
  \caption{Example of a T-tessellation with its components.}
  \label{fig:T:tessellation}
\end{figure}

The space of T-tessellations of the domain $D$ is denoted by
$\cT$. For any T-tessellation $T\in\Te$, let $\sce{T}$ be the set of its cells, $\sve{T}$ the set of its vertices, $\sed{T}$ the set of its edges and $\sse{T}$ the set of its segments. The numbers of cells, vertices, edges and segments are denoted by $\nce{T}$, $\nve{T}$, $\ned{T}$ and $\nse{T}$. Vertices, edges and segments are said to be \emph{internal} when they do not lie on the boundary of the domain $D$. Sets and numbers of internal features are specified by adding a circle on top of symbols: e.g.\ $\sveint{T}$ and  $\nveint{T}$ for the set and the number of internal vertices. The numbers of vertices, edges and cells are closely related to the numbers of segments. The results given below hold under the condition that no internal segment ends at a vertex of the domain $D$. Any vertex is either a vertex of $D$ or the end of an internal segment. Since an internal segment has 2 ends,
\begin{equation}
  \label{eq:vertices:segments}
  \nve{T} = \nve{D} + 2\nseint{T}.
\end{equation}
The sum of vertex degrees in a graph is twice the number of its edges. Vertices in a T-tessellation have degree 2 (for a vertex of $D$) or 3. This observation combined with equality \eqref{eq:vertices:segments} yields the following identity
\begin{equation}
  \label{eq:edges:segments}
  \ned{T} = \nve{D} + 3\nseint{T}.
\end{equation}
Finally, using the Euler formula, one gets
\begin{equation}
  \label{eq:cells:segments}
  \nce{T} = \nseint{T} + 1.
\end{equation}

A T-tessellation yields a unique line pattern $L(T)$ defined as the set of lines supporting its segments. Conversely, for any finite pattern $L$ of lines hitting $D$, there are generally several T-tessellations $T$ such that $L(T)=L$. This set of tessellations is denoted $\cT(L)$ and its number of elements $\nttl{L}$. Kahn \cite[Theorem 3.4]{jkahn} proved the following inequality
\begin{equation}
  \label{eq:tessellation:number}
  \nttl{L} \leq C^{k} \left(\frac{k}{(\log k)^{1-\epsilon}}\right)^{k-k/\log k},
\end{equation}
where $k$ is the number of lines in $L$, $\epsilon$ is an arbitrary positive real number and $C$ a constant only depending on $\epsilon$. In the next sections, inequality \eqref{eq:tessellation:number} will be used for deriving further results involving so-called stable functionals. The stability condition defined below is similar to the one used for point processes (see \cite{geyer:_likel}). It ensures that unnormalized densities are integrable and define distributions on $\cT$ (see sections \ref{sec:random} and \ref{sec:gibbs:extension}).
\begin{definition}
  A non-negative functional $F$ on $\cT$ is said to be stable if there exists a real constant $K$ such that $F(T)\leq K^{\nseint{T}}$ for any $T\in\cT$.
\end{definition}

Below it is noticed that the whole space $\cT$ can be visited by means of three local and simple operators: splits, merges and flips.

A split is the division of a T-tessellation cell by a line segment,
see figures~\ref{fig:smf:a} and \ref{fig:smf:b}.  The continuous space of
all possible splits for a given T-tessellation $T$ is denoted by
$\S{T}$.

A merge is the deletion of a segment consisting of a single edge,
see figures~\ref{fig:smf:a} and \ref{fig:smf:c}. Such
a segment is said to be non-blocking. Other segments are called
blocking segments. The finite space of all possible merges for a given
T-tessellation $T$ is denoted by $\M{T}$. Note that $\M{T}$ may be
empty. For any T-tessellation $T$, there are $\nnbseint{T}$
possible merges, where $\nnbseint{T}$ is the number of internal
non-blocking segments of $T$.

A flip is the deletion of an edge at the end of a blocking segment
combined with the addition of a new edge. At one of the vertices of
the deleted edge, a segment is blocked. The new edge extends the
blocked segment until the next segment, see Figure~\ref{fig:smf:d}. There are $2\nbseint{T}$ possible flips,
where $\nbseint{T}$ is the number of internal blocking segments in
tessellation $T$. The finite space of all possible flips for a given
T-tessellation $T$ is denoted by $\F{T}$.
\begin{figure}
  \centering
  \subfloat[A T-tessellation.\label{fig:smf:a}]{\includegraphics{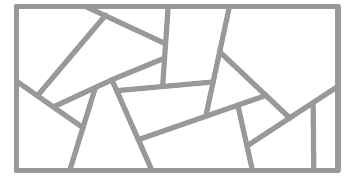}}\\
  \subfloat[Split: the new (black) edge is blocked by two existing segments.\label{fig:smf:b}]{\includegraphics{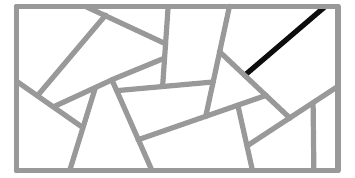}}
  \hspace{3mm}
  \subfloat[Merge: removal of a (light grey) non-blocking segment (in the top-left corner).\label{fig:smf:c}]{\includegraphics{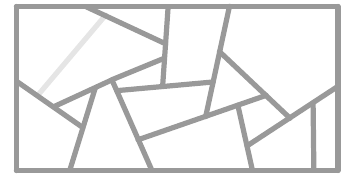}}
  \hspace{3mm}
  \subfloat[Flip: two segments are modified, one is shortened (suppressed edge in light grey) while the other one is extended (added edge in black).\label{fig:smf:d}]{\includegraphics{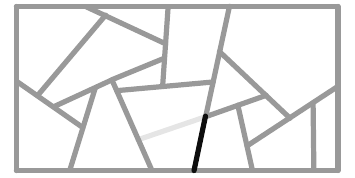}}
  \caption{Splits, merges and flips are local transformations of
    T-tessellations. Bottom row: transformations of the T-tessellation shown on the top row.}
  \label{fig:smf}
\end{figure}

Note that any split can be reversed by a merge (and vice-versa). Also
any flip can be reversed by another flip. Using these three types of
updates it is possible to visit any part of the space $\Te$. Below the
so-called empty tessellation refers to the trivial T-tessellation
$T_D$ that consists of a single cell extending to the whole domain
$D$. 
\begin{proposition}
\label{prop:smf:complet:1}
  Let $T$ be a non-empty T-tessellation. There exists a finite
  sequence of merges and flips which transforms $T$ into the empty
  tessellation $T_D$. Conversely there exists a finite sequence of splits
  and flips that transforms the empty tessellation $T_D$ into $T$.
\end{proposition}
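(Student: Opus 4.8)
### Proof proposal

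The plan is to prove the two assertions separately, although they are almost mirror images of one another. Since any split is reversed by a merge and any flip by a flip, a sequence of merges and flips carrying $T$ to $T_D$ can be read backwards as a sequence of splits and flips carrying $T_D$ to $T$; so it suffices to establish the first assertion, and I would state the second as an immediate corollary of reversibility.

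For the first assertion the natural strategy is induction on $\nseint{T}$, the number of internal segments. If $\nseint{T}=0$ then $T=T_D$ and there is nothing to do. Suppose $\nseint{T}\geq 1$. The key step is: \emph{every non-empty $T$ admits either an internal non-blocking segment (hence a merge) or a flip that strictly decreases $\nbseint{T}$ while keeping $\nseint{T}$ fixed, and after finitely many such flips a non-blocking segment must appear}. Concretely, if $T$ has a non-blocking internal segment, perform the corresponding merge; the result is a T-tessellation with $\nseint{}$ decreased by one, and we invoke the induction hypothesis. If instead every internal segment of $T$ is blocking, I would pick a segment $s$ and consider what happens at its two ends: each end of $s$ abuts another segment. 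Flipping at the end of $s$ shortens $s$ and lengthens the segment it runs into. I would argue that by repeatedly flipping one can ``unblock'' some segment — that is, reduce some blocked segment to a single edge — so that a merge becomes available. Making this termination argument precise is the crux: one needs a quantity (for instance, a suitable count of edges lying on blocking segments, or a lexicographic combination of segment lengths) that strictly decreases under the chosen flips and is bounded below, guaranteeing that a non-blocking segment is produced after finitely many flips.

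The main obstacle, then, is exactly this termination/progress argument for the all-blocking case: showing that flips cannot cycle forever without ever creating a non-blocking segment. One clean way to handle it is to set up a strict monotone potential. A candidate is the following: since flips do not change $\nseint{T}$, equations~\eqref{eq:vertices:segments}--\eqref{eq:cells:segments} show the global counts $\nve{T}$, $\ned{T}$, $\nce{T}$ are invariant, so the potential must be finer. I would try the total number of edges carried by blocking segments, or the sum over internal segments of (number of edges in the segment $-1$); one then has to verify that, starting from an all-blocking configuration, there is always a flip strictly decreasing this potential — essentially because a flip that shortens a blocking segment $s$ to a single edge removes $s$ from the blocking class, and iterating along a well-chosen segment reaches such a state. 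Once the potential hits its minimum a non-blocking segment exists, a merge applies, and the induction on $\nseint{T}$ closes the argument. I expect the bookkeeping — checking that each intermediate object is still a genuine T-tessellation (no two distinct aligned segments, all internal vertices T-vertices) after a flip or merge — to be routine but the place where care is needed.
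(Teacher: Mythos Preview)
Your proposal is correct and follows essentially the same route as the paper: reduce $T$ to $T_D$ by repeatedly merging non-blocking segments, using flips when none exist to unblock one, and obtain the converse by reversing the sequence. The paper's argument is in fact a bare sketch that simply asserts ``one can make a blocking segment non-blocking by a finite series of flips''; your identification of this termination step as the crux, and your suggestion to control it via a monotone potential (or, more directly, by repeatedly flipping edges off one end of a fixed blocking segment until it consists of a single edge), already goes further than what the paper provides.
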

In order to empty a given T-tessellation, one can start by removing every non-blocking segments (merges). If at some point there is no non-blocking segment left, one can make a blocking segment non-blocking by a finite series of flips. The reverse series of inverted operations (splits and flips) defines a path from the empty tessellation to the considered T-tessellation.
\begin{corollary}
  Consider any pair of distinct T-tessellations. There exists a finite
  sequence of splits, merges and flips which transforms one
  T-tessellation into the other one.
\end{corollary}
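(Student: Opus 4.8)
The plan is to reduce the corollary directly to Proposition~\ref{prop:smf:complet:1} by routing both T-tessellations through the empty tessellation~$T_D$, exploiting the fact that every elementary operator has an inverse of the same family (a split is undone by a merge, a merge by a split, and a flip by a flip). So the composite path we build will itself consist only of splits, merges and flips, as required.

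Concretely, let $T_1$ and $T_2$ be the two distinct T-tessellations. If one of them is the empty tessellation $T_D$, the statement is an immediate consequence of Proposition~\ref{prop:smf:complet:1} (in one direction or the other), so assume both are non-empty. First I would apply the first half of Proposition~\ref{prop:smf:complet:1} to $T_1$ to obtain a finite sequence of merges and flips transforming $T_1$ into $T_D$. Next I would apply the second half of Proposition~\ref{prop:smf:complet:1} to $T_2$ to obtain a finite sequence of splits and flips transforming $T_D$ into $T_2$. Concatenating these two sequences yields a finite sequence of splits, merges and flips carrying $T_1$ to $T_2$, which is exactly what is claimed.

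The only point that needs a word of care is the degenerate case $T_1 = T_D$ or $T_2 = T_D$ (handled above by invoking just one half of the proposition), and the observation that concatenation of two finite sequences of admissible operators is again a finite sequence of admissible operators — both trivial. There is essentially no obstacle here: all the work has already been done in establishing Proposition~\ref{prop:smf:complet:1}, and this corollary is merely the observation that ``connected to a common point'' implies ``connected to each other''. The reversibility remark preceding the proposition is what guarantees we never need an operator outside the split/merge/flip repertoire, but in fact we do not even need reversibility: the two halves of the proposition already supply paths in both the required directions, so plain concatenation suffices.
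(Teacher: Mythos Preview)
Your proof is correct and follows exactly the approach implicit in the paper: the corollary is stated immediately after Proposition~\ref{prop:smf:complet:1} without a separate proof, precisely because it follows by concatenating a path $T_1\to T_D$ (merges and flips) with a path $T_D\to T_2$ (splits and flips). Your handling of the degenerate cases and the concatenation remark are fine and make explicit what the paper leaves tacit.
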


Next sections will involve uniform (probability) measures on the three spaces $\S{T}$, $\M{T}$ and $\F{T}$. Let us introduce these measures here. Since the spaces $\M{T}$
and $\F{T}$ are finite, any probability measure on those spaces is
just a finite set of probability values. In particular, one may
consider uniform distributions on the finite spaces $\M{T}$ and
$\F{T}$. Defining a probability measure on the continuous space
$\S{T}$ is less obvious.

Any split of $T$ can be represented by a pair $(c,l)$ where $c$ is a
cell of $T$ and $l$ is a line splitting $c$. Therefore $\S{T}$ can be
identified to the set of pairs $(c,l)$ such that $l$ splits $c$.
Consider the counting measure on the set of cells of $T$. The space of
planar lines is endowed with the unique (up to a
multiplicative constant) Haar measure (i.e.\ invariant under translations and rotations).
Here we consider the Haar measure scaled such that the mass of
the subset of lines hitting say the unit square is equal to $4/\pi$.
The space $\S{T}$ of splits applicable to $T$ is endowed with the
product of both measures restricted to the set of pairs $(c,l)$ such
that $l$ splits $c$. The infinitesimal volume element for that measure
at a split $S\in\S{T}$ is denoted by $dS$.  The measure $dS$ can be written as
\begin{equation}
  \label{eq:def:dS}
  dS = \sum_{c\in C(T)} \mathbf{1}_{\left\{c\cap l\neq\emptyset\right\}}\;dl,
\end{equation}
where $\mathbf{1}$ is the indicator function and $dl$ denotes the infinitesimal volume element of the Haar measure on the space of lines.
Using standard results
from integral geometry (Crofton formula, see e.g.\ \cite{stoyan-1995}), it can be checked that the total mass of that
measure is equal to
\begin{equation*}
  \frac{2l(T)-l(D)}{\pi},
\end{equation*}
where $l(T)$ is the total edge length of $T$. Note that $2l(T)-l(D)$ is the sum of cell perimeters. The measure on
$\S{T}$ defined above can be normalized into a probability measure.
Below, that probability measure on $\S{T}$ is referred to as the
uniform probability measure on $\S{T}$ and a random split $\mathbf{S}$
of $T$ is said to be uniform if its distribution is the uniform
probability measure on $\S{T}$. Picking a split with a uniform
distribution can be done using a two-step procedure:
\begin{enumerate}
\item Select a cell $c$ of the T-tessellation $T$ with a probability
  proportional to its perimeter.
\item Select the line splitting $c$ according to the uniform and
  isotropic probability measure on the set of lines hitting $c$.
\end{enumerate}

\section{A completely random T-tessellation}
\label{sec:random}

Let us start with a formal definition of a random T-tessellation. The space $\cT$ is equipped with the standard hitting $\sigma$-algebra
$\sigma(\cT)$ (see \cite{matheron}) generated by events of the form
\begin{equation*}
  \left\{T \in \cT:\left(\bigcup_{e\in\sed{T}}e\right) \cap K \neq\emptyset\right\}
\end{equation*}
where $K$ runs through the set of compact subsets of $D$. \emph{A
  random T-tessellation}  is a random variable $\TT$ taking
values in $(\cT,\sigma(\cT))$.

Our candidate of completely random T-tessellation is based on the Poisson line process, denoted here by $\mathbf{L}$. The process $\mathbf{L}$ is supposed to have a unit linear intensity (mean length per unit area). Since only T-tessellations of the bounded domain $D$ are considered, only the restriction of $\mathbf{L}$ to $D$, also denoted $\mathbf{L}$ for sake of simplicity, is relevant. Consider the probability measure $\mu$ on $\sigma(\Te)$ defined by
\begin{equation}
  \label{eq:def:mu}
  \mu(A) = Z^{-1} \mean \sum_{T\in\cT(\mathbf{L})}\mathbf{1}_A(T), \quad A\in\sigma(\Te),
\end{equation}
where $Z$ is a normalizing constant. In order to prove that $\mu$ is well-defined, it must be checked that the mean number of T-tessellations in $\cT(\mathbf{L})$ is finite. The latter result is a consequence of the following more  general result.
\begin{theorem}
  \label{thm:exponential:moment}
  Let $\mathbf{L}$ be the unit Poisson line process restricted to the bounded domain $D$ and let $F$ be a stable functional on $\cT$. Then
  \begin{equation*}
    \mean \sum_{T\in\cT(\mathbf{L})}F(T)^x
  \end{equation*}
  is finite for any real $x$.
\end{theorem}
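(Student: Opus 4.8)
The plan is to bound, for every realisation of $\mathbf{L}$, the sum $\sum_{T\in\cT(\mathbf{L})}F(T)^x$ by a deterministic function of the single integer $N:=\#\{\text{lines of }\mathbf{L}\text{ meeting }D\}$, and then to integrate that bound against the law of $N$. Every $T\in\cT(\mathbf{L})$ has line pattern $L(T)=\mathbf{L}$, hence exactly $N$ supporting lines; and because a T-tessellation contains no pair of distinct aligned segments, each supporting line carries exactly one segment. Consequently $\nse{T}=N$, and in particular $\nseint{T}\le N$, for every $T\in\cT(\mathbf{L})$. Stability then turns $F(T)^x$ into a power of $N$: assuming without loss of generality $K\ge 1$ (replace $K$ by $\max(K,1)$), for $x\ge 0$ one has $F(T)^x\le K^{x\nseint{T}}\le B^{N}$ with $B:=K^{x}$, and the same bound holds for $x<0$ as soon as $F$ is also bounded below by a positive power $b^{\nseint{T}}$, $b>0$, which is the two-sided stability satisfied by the functionals used in this paper. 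Bounding the number $\nttl{\mathbf{L}}$ of elements of $\cT(\mathbf{L})$ by Kahn's inequality \eqref{eq:tessellation:number} (taking, say, $\epsilon=\tfrac12$) gives
\begin{equation*}
  \sum_{T\in\cT(\mathbf{L})}F(T)^x\;\le\;\nttl{\mathbf{L}}\,B^{N}\;\le\;(BC)^{N}\left(\frac{N}{(\log N)^{1/2}}\right)^{N-N/\log N}.
\end{equation*}

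Next I would take expectations, which reduces the theorem to a convergence statement about an explicit series. The number $N$ of lines of the unit Poisson line process that meet the bounded domain $D$ is Poisson distributed with finite parameter $\lambda$ equal to the total measure of the set of lines hitting $D$ (with the normalisation of Section~\ref{sec:space}, $\lambda=l(D)/\pi$ by Crofton's formula). Hence it suffices to prove
\begin{equation*}
  \sum_{n\ge 0}e^{-\lambda}\,\frac{\lambda^{n}}{n!}\,(BC)^{n}\left(\frac{n}{(\log n)^{1/2}}\right)^{n-n/\log n}<\infty .
\end{equation*}

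The key step I would carry out is to recognise that Kahn's bound is, up to a logarithmic correction in the exponent, of the same order as $n!$. Using $n^{1/\log n}=e$, hence $n^{n-n/\log n}=n^{n}e^{-n}$, one rewrites
\begin{equation*}
  \left(\frac{n}{(\log n)^{1/2}}\right)^{n-n/\log n}=n^{n}e^{-n}\,(\log n)^{-\frac12(n-n/\log n)};
\end{equation*}
Stirling's formula gives $n^{n}/n!\sim e^{n}/\sqrt{2\pi n}$, so the general term of the series is
\begin{equation*}
  \frac{e^{-\lambda}(\lambda BC)^{n}}{\sqrt{2\pi n}}\,(\log n)^{-\frac12(n-n/\log n)}(1+o(1))=\frac{e^{-\lambda}(\lambda BC)^{n}}{\sqrt{2\pi n}}\,\exp\!\left(-\tfrac12\,n\log\log n\,(1+o(1))\right).
\end{equation*}
Since $\log\log n\to\infty$, the factor $\exp(-\tfrac12 n\log\log n)$ eventually dominates the geometric growth $(\lambda BC)^{n}$, so the terms are summable and the series converges; this proves the theorem.

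I expect this asymptotic estimate to be the only genuinely delicate point. One has to see that the apparently enormous factor $\bigl(k/(\log k)^{1-\epsilon}\bigr)^{k-k/\log k}$ in \eqref{eq:tessellation:number} is essentially $k!$ divided by $(\log k)^{(1-\epsilon)k}$, and that this $(\log k)^{-(1-\epsilon)k}$ saving — super-geometric in $k$ — is exactly what cancels the $e^{+n}$ produced by $n^{n}/n!$ in the Poisson expectation and, beyond that, absorbs any fixed geometric factor $(\lambda BC)^{n}$ stemming from stability and from Kahn's constant $C$. Everything else is routine bookkeeping: the identity $\nse{T}=N$ for $T\in\cT(\mathbf{L})$ that lets stability apply, the cardinality bound \eqref{eq:tessellation:number}, and the lightness of the Poisson tail of $N$.
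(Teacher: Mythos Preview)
Your argument is essentially the paper's own proof: bound $F(T)^x$ via stability, multiply by Kahn's cardinality estimate $\nttl{\mathbf{L}}$, take the Poisson expectation over the line count, and verify that the resulting series converges because the $(\log k)^{-(1-\epsilon)k}$ saving in Kahn's bound dominates any fixed geometric factor once Stirling absorbs $k^{k}/k!$; your explicit use of Stirling and of $n^{1/\log n}=e$ is just a cleaner rendering of the same asymptotics the paper carries out algebraically. Your remark that the case $x<0$ requires a lower bound of the form $F(T)\ge b^{\nseint{T}}$ is a valid observation---the paper's first displayed inequality $F(T)^x\le K^{x\nseint{T}}$ is only justified for $x\ge 0$ as written, so two-sided stability is indeed tacitly needed for negative $x$.
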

Theorem \ref{thm:exponential:moment} can be proved based of inequality \eqref{eq:tessellation:number} as shown in Appendix~\ref{appendix:exponential:moment}. Applying Theorem \ref{thm:exponential:moment} with $F\equiv 1$ proves that the measure $\mu$ defined by Equation \eqref{eq:def:mu} is indeed finite. The normalizing constant $Z$ has no known analytical expression. As a further consequence of Theorem \ref{thm:exponential:moment}, for $\TT\sim\mu$ and for any stable functional $F$, all moments of $F(\TT)$ are finite. 

If $\TT\sim\mu$, then given $L(\TT)=L$, $\TT$ is uniformly distributed on the finite set $\cT(L)$. It should be noticed that $L(\TT)$ is not Poisson. In particular, given the number of lines in $L(\TT)$, the probability of a configuration of lines is proportional to the size of $\cT(\mathbf{L})$, i.e.\ to the number of T-tessellations supported by the lines. Further insights into the distribution $\mu$ can be gained based on Papangelou kernels. 

Papangelou kernels were defined for point processes \cite{papangelou, kallenberg}. Heuristically, the (first-order) Papangelou kernel is the conditional probability to have a point at a given location given the point process outside this location. Formally, the Papangelou kernel is defined based on a decomposition of the reduced Campbell measure. The latter can be interpreted as the joint distribution of a typical point of a point process and the rest of the point process. 

Extending these notions to arbitrary random T-tessellations is not straightforward because tessellation components (vertices, edges) are more geometrically constrained than isolated points. In T-tessellations, features that can easily be added or removed are (non-blocking) segments. Below, segments that can be added to a T-tessellation $T$ will be identified to splits and $\S{T}$ will denote the space of such segments. Conversely, $\M{T}$ will denote the (finite) set of non-blocking segments that can be removed from $T$. Let $\CSplitSpace$ be the space
\begin{equation*}
  \CSplitSpace = \left\{(s,T):T\in\cT,s\in\S{T}\right\}.
\end{equation*}
\begin{definition} \label{def:split:campbell}
  The split (reduced) Campbell measure of a random T-tes\-sel\-la\-tion $\TT$ is defined by the following identity
  \begin{equation}
    \label{eq:def:split:campbell}
    \CSplitMeas(\phi) = \mean\sum_{m\in\M{\TT}}\phi\left(m,\TT\setminus\{m\}\right),\quad \phi:\CSplitSpace\rightarrow\R.
  \end{equation}
  The split Papangelou kernel is the kernel $\PSplit$ characterized by the identity
  \begin{equation}
    \label{eq:dev:split:papangelou}
    \CSplitMeas(\phi) = \mean\int_{\S{\TT}}\phi(s,\TT)\;\PSplit(\TT,ds),\quad \phi:\CSplitSpace\rightarrow\R.
  \end{equation}
\end{definition}
Note that for a point process, the reduced Campbell measure is defined by replacing the sum in Equation \eqref{eq:def:split:campbell} by a sum on all points of the process. For a T-tessellation, the sum is reduced to segments that are non-blocking. Continuing the comparison with point processes, it should be noticed that the Papangelou kernel $\PSplit(T,ds)$ is a measure on a space which depends on the realization $T$ while the Papangelou kernel of a planar point process is just a measure on $\R^2$.

A simple analytical expression of the split Papangelou kernel is available for the random T-tessellation $\TT\sim\mu$. Section~\ref{sec:space} introduced a uniform measure on splits. This measure with infinitesimal element denoted $dS$ induces a measure on non-blocking segments that can be added to a T-tessellation. Below an infinitesimal element of the latter measure is denoted by $ds$.
\begin{proposition} \label{prop:split:papangelou:mu}
  The split Papangelou kernel of a random T-tessellation $\TT\sim\mu$ can be expressed as
  \begin{equation}
    \label{eq:split:papangelou:mu}
    \PSplit(T,ds) = ds.
  \end{equation}
\end{proposition}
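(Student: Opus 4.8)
The plan is to verify the characterizing identity \eqref{eq:dev:split:papangelou} with $\PSplit(T,ds)=ds$, i.e.\ to show that for every measurable $\phi:\CSplitSpace\to\R$,
\begin{equation*}
  \mean\sum_{m\in\M{\TT}}\phi\left(m,\TT\setminus\{m\}\right)
  = \mean\int_{\S{\TT}}\phi(s,\TT)\;ds.
\end{equation*}
First I would rewrite the left-hand side using the definition \eqref{eq:def:mu} of $\mu$, so that the expectation over $\TT$ becomes an expectation over the Poisson line process $\mathbf{L}$ together with a uniform sum over $\cT(\mathbf{L})$ (divided by $Z$). Similarly the right-hand side unfolds to $Z^{-1}\mean\sum_{T\in\cT(\mathbf{L})}\int_{\S{T}}\phi(s,T)\,ds$. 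Removing a non-blocking segment $m$ from $T$ deletes the line supporting $m$ from $L(T)$ exactly when $m$ is the only segment on that line; in general $\M{T}$-elements correspond to non-blocking segments, and deleting one either removes a line from the pattern or merely shortens the segment structure on an existing line. The heart of the matter is a combinatorial bijection: summing over pairs $(T,m)$ with $T\in\cT(\mathbf{L})$ and $m\in\M{T}$ should correspond, after deletion, to summing over pairs $(T',s)$ where $T'$ ranges over $\cT(\mathbf{L}')$ for line patterns $\mathbf{L}'$ obtained from $\mathbf{L}$, and $s$ is a split of $T'$ re-creating $m$.

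The key step is therefore to set up this correspondence carefully and to match the two measures under it. On the Poisson side, the Mecke (Slivnyak–Mecke) formula for the line process is the natural tool: removing/adding a line to a Poisson process of unit intensity on the space of lines hitting $D$ converts a sum over lines of $\mathbf{L}$ into an integral $dl$ against the intensity measure, with the rest of $\mathbf{L}$ having the same distribution. Concretely, I would split the sum over $m\in\M{\TT}$ according to whether $m$ is the unique segment on its supporting line. When it is, Mecke's formula turns the expectation over $\mathbf{L}$ and the sum over that line into $\mean_{\mathbf{L}}\int \cdots\,dl$; the remaining bookkeeping is to check that the uniform weights on $\cT(\mathbf{L})$ versus $\cT(\mathbf{L}\cup\{l\})$ combine with the $1/\nttl{\cdot}$ normalizations and the definition \eqref{eq:def:dS} of $dS$ (sum over cells of the line-hitting indicator times $dl$) to reproduce exactly $\int_{\S{T}}\phi(s,T)\,ds$. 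When $m$ is not the unique segment on its line, both sides must match without invoking Mecke — this is a finite, within-fixed-$\mathbf{L}$ combinatorial identity about how merges of $T$ and splits of $T\setminus\{m\}$ correspond, together with the fact that $\S{T}$ as a \emph{space of segments} (as opposed to splits) already accounts for the multiplicity of segments lying on a common line.

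I expect the main obstacle to be precisely the interplay between the line-level Poisson structure and the tessellation-level uniform measure on $\cT(L)$: one must show that the ratio $\nttl{L}/\nttl{L\setminus\{l\}}$-type factors do not survive, i.e.\ that the completely-random construction \eqref{eq:def:mu} is exactly the one for which adding a segment via a split is ``free'' in the Papangelou sense. The cleanest route is probably to prove the identity first conditionally on $\mathbf{L}$ for the non-line-removing part, then handle the line-removing part by Mecke, being scrupulous about the distinction between the split measure $dS$ on $\S{T}$ (pairs cell$\times$line) and the induced segment measure $ds$, since a single line may be split into several segments by the rest of the tessellation and each such segment is a distinct element of $\S{T}$. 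Once the correspondence and the measure-matching are in place, uniqueness of the Papangelou kernel characterized by \eqref{eq:dev:split:papangelou} yields $\PSplit(T,ds)=ds$. Finiteness of all the integrals and sums involved is guaranteed by Theorem~\ref{thm:exponential:moment} applied to suitable stable functionals, so no separate integrability argument is needed.
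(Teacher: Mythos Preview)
Your overall strategy is the paper's strategy: unfold $\mu$ via \eqref{eq:def:mu}, replace the sum over non-blocking segments by a sum over supporting lines, apply the Slivnyak--Mecke identity for the Poisson line process, and match the resulting cell-sum against the definition \eqref{eq:def:dS} of $dS$. But two misreadings of the definitions are making the argument look harder than it is.

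First, your case split ``$m$ is the unique segment on its line'' versus ``$m$ is not'' is empty on one side. By Definition~1, a T-tessellation has \emph{no pair of distinct aligned segments}: each line of $L(T)$ supports exactly one segment. Hence removing a non-blocking segment \emph{always} deletes its supporting line from $L(T)$, and your ``within fixed $\mathbf{L}$'' combinatorial identity for the other case never arises. The entire Campbell sum is handled by Mecke.

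Second, there are no $1/\nttl{\cdot}$ factors to cancel. The measure $\mu$ in \eqref{eq:def:mu} is a \emph{flat sum} over $\cT(\mathbf{L})$, normalized only by the global constant $Z$; it is not a uniform average conditional on $\mathbf{L}$. So the bookkeeping you anticipate with $\nttl{L}/\nttl{L\setminus\{l\}}$ ratios simply does not appear. What does appear is exactly the many-to-one nature of the map $T\mapsto T\setminus\{s(l,T)\}$ from $\{T\in\cT(L):s(l,T)\text{ non-blocking}\}$ to $\cT(L\setminus\{l\})$: its fiber over $\tilde T$ is indexed by the cells of $\tilde T$ hit by $l$, and this cell-sum is precisely what turns $dl$ into $dS$ via \eqref{eq:def:dS}. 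Once you drop the spurious case distinction and the nonexistent normalizations, your sketch collapses to the paper's proof.
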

The proof of that proposition, detailed in Appendix~\ref{appendix:proof:split:papangelou}, is based on a
fundamental property of the Campbell measure for Poisson (line)
processes. Note that a Poisson line process is involved in the
definition of the measure $\mu$ defined on $\Te$, see
Equation~\eqref{eq:def:mu}. This result about the split Papangelou kernel is to be compared with the analog result concerning the Papangelou kernel of Poisson point processes. A heuristic interpretation of Proposition \ref{prop:split:papangelou:mu} is that the conditional distribution of a non-blocking segment of $\TT\sim\mu$ does not depend on the rest of $\TT$ (i.e.\ is uniform). As a direct consequence of Proposition \ref{prop:split:papangelou:mu}, we have the following Georgii-Nguyen-Zessin formula.
\begin{corollary} \label{cor:split:gnz:mu}
  If $\TT\sim\mu$, then the following identity holds
  \begin{equation}
    \label{eq:split:gnz:mu}
    \mean \sum_{m\in\M{\TT}}\phi\left(m,\TT\setminus\{m\}\right) =
    \mean \int_{\S{\TT}}\phi(s,\TT)\;ds,\quad \CSplitSpace:\phi\rightarrow\R.
  \end{equation}
\end{corollary}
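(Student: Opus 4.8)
The plan is to obtain \eqref{eq:split:gnz:mu} by chaining the two characterizations of the split Campbell measure given in Definition~\ref{def:split:campbell} with the explicit form of the split Papangelou kernel. Indeed, Definition~\ref{def:split:campbell} provides, on one hand,
\begin{equation*}
  \CSplitMeas(\phi) = \mean\sum_{m\in\M{\TT}}\phi\left(m,\TT\setminus\{m\}\right),
\end{equation*}
and, on the other hand,
\begin{equation*}
  \CSplitMeas(\phi) = \mean\int_{\S{\TT}}\phi(s,\TT)\;\PSplit(\TT,ds).
\end{equation*}
First I would invoke Proposition~\ref{prop:split:papangelou:mu}, which applies precisely because $\TT\sim\mu$, to substitute $\PSplit(\TT,ds)=ds$ into the second expression. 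Equating the two expressions for $\CSplitMeas(\phi)$ then yields \eqref{eq:split:gnz:mu} directly.

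The only delicate point is the class of test functions $\phi$ for which this manipulation is rigorous: the substitution into the Papangelou characterization is unambiguous only when the sums and integrals involved converge absolutely. I would therefore first establish the identity for non-negative $\phi$, where both sides are well-defined elements of $[0,\infty]$ and Proposition~\ref{prop:split:papangelou:mu} applies without restriction, and then extend it to general $\phi$ via the decomposition $\phi=\phi^{+}-\phi^{-}$ under the assumption that one side --- hence, by the non-negative case, both --- is finite. For the functionals of interest in the sequel, that finiteness is supplied by Theorem~\ref{thm:exponential:moment} together with the stability of the densities considered.

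The real work does not lie in this corollary, which is essentially a one-line consequence of the preceding statements, but is entirely deferred to Proposition~\ref{prop:split:papangelou:mu}: one must show that the split side of the Campbell measure, taken with the kernel $ds$, reproduces the merge side. That identity rests in turn on the Mecke--Campbell formula for the Poisson line process $\mathbf{L}$ entering the definition~\eqref{eq:def:mu} of $\mu$, together with a careful accounting of how, after conditioning on $L(\TT)$, removing a non-blocking segment of a tessellation in $\cT(\mathbf{L})$ corresponds to deleting a line of $\mathbf{L}$. Once that correspondence and the resulting kernel identity $\PSplit(T,ds)=ds$ are in hand, the present corollary follows immediately.
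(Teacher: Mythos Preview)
Your argument is correct and matches the paper's own treatment: the corollary is stated there as ``a direct consequence of Proposition~\ref{prop:split:papangelou:mu}'', obtained exactly by equating the two expressions for $\CSplitMeas(\phi)$ in Definition~\ref{def:split:campbell} and inserting $\PSplit(T,ds)=ds$. Your additional remarks on the class of admissible $\phi$ and on where the real work lies are accurate and go slightly beyond what the paper spells out.
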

Note that this result can be reformulated in terms of splits and merges. In particular, in the left-hand side of \eqref{eq:split:gnz:mu}, $\TT\setminus\{m\}$ can be written as $M\TT$ where $M$ is the merge consisting of removing the non-blocking segment $m$ from $\TT$. Furthermore, the removable segment $m$ is identified to the split $M^{-1}$. The reformulation is as follows: 
\begin{equation} \label{eq:alt:split:gnz:mu}
  \mean \sum_{M\in\M{\TT}}\phi\left(M^{-1},M\TT\right) =
    \mean \int_{\S{\TT}}\phi(S,\TT)\;dS.
\end{equation}

By analogy, a further result can be obtained for flips. The flip Campbell measure is defined on the space
\begin{equation*}
  \CFlipSpace = \left\{(F,T):T\in\cT,F\in\F{T}\right\}.
\end{equation*}
\begin{definition}
  \label{def:flip:campbell}
  The flip Campbell measure of a given random T-tes\-sel\-la\-tion $\TT$ is defined by the following identity
  \begin{equation}
    \label{eq:def:flip:campbell}
    \CFlipMeas(\phi) = \mean\sum_{F\in\F{\TT}}\phi\left(F^{-1},F\TT\right),\quad \phi:\CFlipSpace\rightarrow\R.
  \end{equation}
  The flip Papangelou kernel is the kernel $\PFlip$ characterized by the identity
  \begin{equation}
    \label{eq:dev:flip:papangelou}
    \CFlipMeas(\phi) = \mean\sum_{F\in\F{\TT}}\phi(F,\TT)\;\PFlip(\TT,F),\quad \phi:\CFlipSpace\rightarrow\R.
  \end{equation}
\end{definition}
Again a simple analytical expression of the flip Papangelou kernel can be derived for $\TT\sim\mu$. The derivation of this expression is based on the definition \eqref{eq:def:mu} involving a flat sum of T-tessellations supported by a line pattern. 
\begin{proposition} \label{prop:flip:papangelou:mu}
  The flip Papangelou kernel of a random T-tessellation $\TT\sim\mu$ can be expressed as
  \begin{equation}
    \label{eq:flip:papangelou:mu}
    \PFlip(T,F) \equiv 1.
  \end{equation}
\end{proposition}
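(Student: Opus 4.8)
The plan is to combine the representation \eqref{eq:def:mu} of $\mu$ with a combinatorial symmetry of the flip operation, exploiting the fact that a flip does not alter the underlying line pattern. Since the flip Papangelou kernel is characterised by \eqref{eq:dev:flip:papangelou}, it suffices to verify that the constant kernel $\PFlip\equiv 1$ satisfies that identity, i.e.\ that
\begin{equation*}
  \mean\sum_{F\in\F{\TT}}\phi\left(F^{-1},F\TT\right) = \mean\sum_{F\in\F{\TT}}\phi(F,\TT)
\end{equation*}
for all $\phi:\CFlipSpace\to\R$ (it is enough to treat $\phi\ge 0$; the expectations are finite for bounded $\phi$ because the number $2\nbseint{T}\le 2\nseint{T}$ of flips of $T$ is dominated by a stable functional, so Theorem~\ref{thm:exponential:moment} applies). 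By \eqref{eq:def:mu}, the left-hand side equals $Z^{-1}\mean\sum_{T\in\cT(\mathbf{L})}\sum_{F\in\F{T}}\phi(F^{-1},FT)$ and the right-hand side equals $Z^{-1}\mean\sum_{T\in\cT(\mathbf{L})}\sum_{F\in\F{T}}\phi(F,T)$. Hence it is enough to establish, for every finite pattern $L$ of lines hitting $D$, the identity
\begin{equation*}
  \sum_{T\in\cT(L)}\sum_{F\in\F{T}}\phi\left(F^{-1},FT\right) = \sum_{T\in\cT(L)}\sum_{F\in\F{T}}\phi(F,T).
\end{equation*}

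The geometric ingredient is that flips preserve line patterns: if $F\in\F{T}$, then $L(FT)=L(T)$. Indeed, a flip removes the terminal edge of a blocking segment — which, having at least two edges, still carries an edge and thus still lies on its own supporting line — and lengthens another, already present segment along that same segment's line, while every remaining segment keeps all of its edges. Granting this, I would show that for fixed $L$ the assignment $\Phi:(T,F)\mapsto(FT,F^{-1})$ maps the index set $\{(T,F):T\in\cT(L),\,F\in\F{T}\}$ into itself — $FT\in\cT(L)$, and the reverse flip $F^{-1}$, which deletes the edge just created and re-extends the just-shortened segment up to its former endpoint, belongs to $\F{FT}$ — and that $\Phi$ is an \emph{involution}: $F^{-1}FT=T$ holds by construction of $F^{-1}$, while $(F^{-1})^{-1}=F$ follows by tracking, at each step, which edge is deleted and which segment is re-extended. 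An involution is a bijection of the index set onto itself, so the change of summation variables $(T,F)\mapsto(T',F')=\Phi(T,F)$ turns $\phi(F^{-1},FT)$ into $\phi(F',T')$ and carries the first sum above into the second. This gives the per-$L$ identity, hence the proposition.

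The step that will need the most care is the combinatorial bookkeeping for the reverse flip: one must check that $F^{-1}$ is unambiguously defined (there is exactly one flip of $FT$ whose result is $T$, namely the one removing the unique edge of $FT$ absent from $T$), that it is a legitimate flip (the segment lengthened by $F$ is blocking in $FT$, so it has a removable terminal edge, and extending the segment blocked at the other vertex exactly restores the removed edge), and that $F\mapsto F^{-1}$ squares to the identity. All of this is local — it concerns only the two segments and the two vertices touched by $F$ — so it reduces to a short inspection of the configuration at the flipped edge. It is also here that the contrast with Proposition~\ref{prop:split:papangelou:mu} shows up: no property of the Poisson line process is invoked, the whole argument taking place within a single fibre $\cT(L)$.
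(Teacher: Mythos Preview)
Your proof is correct and follows essentially the same route as the paper: both use the representation \eqref{eq:def:mu} to reduce to a per-line-pattern identity and then apply the change of variables $(T,F)\mapsto(FT,F^{-1})$, relying on the facts that flips preserve $L(T)$ and that this map is an involution on $\{(T,F):T\in\cT(L),\,F\in\F{T}\}$. Your write-up is somewhat more explicit about the bookkeeping (well-definedness of $F^{-1}$, the involution property), but the underlying argument is the same.
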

A detailed proof is provided in Appendix~\ref{appendix:proof:flip:papangelou}.
Again, a heuristic interpretation of Proposition~\ref{prop:flip:papangelou:mu} is that all flips have the same conditional probability given the rest of  $\TT$. Proposition~\ref{prop:flip:papangelou:mu} leads to a Georgii-Nguyen-Zessin formula for flips.
\begin{corollary} \label{cor:flip:gnz:mu}
  If $\TT\sim\mu$, then the following identity holds
  \begin{equation}
    \label{eq:flip:gnz:mu}
    \mean \sum_{F\in\F{\TT}}\phi\left(F^{-1},F\TT\right) =
    \mean \sum_{F\in\F{\TT}}\phi(F,\TT),\quad \phi:\CFlipSpace\rightarrow\R.
  \end{equation}
\end{corollary}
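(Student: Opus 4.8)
The plan is to derive Corollary~\ref{cor:flip:gnz:mu} as an immediate consequence of Proposition~\ref{prop:flip:papangelou:mu}, exactly in the spirit of the analogous passage from Proposition~\ref{prop:split:papangelou:mu} to Corollary~\ref{cor:split:gnz:mu}. First I would recall the definition \eqref{eq:def:flip:campbell} of the flip Campbell measure $\CFlipMeas$ and its characterization \eqref{eq:dev:flip:papangelou} via the flip Papangelou kernel $\PFlip$. Combining the two, for any $\phi:\CFlipSpace\rightarrow\R$ one has
\begin{equation*}
  \mean\sum_{F\in\F{\TT}}\phi\left(F^{-1},F\TT\right)
  = \CFlipMeas(\phi)
  = \mean\sum_{F\in\F{\TT}}\phi(F,\TT)\;\PFlip(\TT,F).
\end{equation*}
Then I would substitute the explicit value $\PFlip(T,F)\equiv 1$ supplied by Proposition~\ref{prop:flip:papangelou:mu}, which collapses the right-hand side to $\mean\sum_{F\in\F{\TT}}\phi(F,\TT)$, yielding \eqref{eq:flip:gnz:mu}.

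The only point requiring a word of care is the applicability of \eqref{eq:dev:flip:papangelou} to the function $\phi$ at hand: the identity characterizing $\PFlip$ is stated for real-valued $\phi$, and to invoke it legitimately one should note that it extends by the usual decomposition $\phi = \phi^+ - \phi^-$ (and truncation/monotone convergence) to all $\phi$ for which both sides are well-defined, in particular to all bounded measurable $\phi$ and to non-negative $\phi$; since the flip spaces $\F{T}$ are finite, the sums are finite for each realization, so the only integrability issue is with respect to the expectation $\mean$, and one restricts to $\phi$ making $\mean\sum_{F\in\F{\TT}}|\phi(F,\TT)|$ finite (e.g. using that $\nbseint{\TT}$ has finite moments, a consequence of Theorem~\ref{thm:exponential:moment} applied to a stable functional bounding the number of blocking segments). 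Under that mild restriction the manipulation above is fully rigorous.

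There is essentially no obstacle here: the corollary is a formal rewriting, and all the analytic content has already been absorbed into Proposition~\ref{prop:flip:papangelou:mu}. If anything, the ``hard part'' is purely bookkeeping — making sure that the flip $F$, its inverse $F^{-1}$, and the transformed tessellation $F\TT$ are handled consistently on both sides, and that the pairing $(F^{-1},F\TT)\leftrightarrow(F,\TT)$ used implicitly in \eqref{eq:def:flip:campbell} is the one for which Proposition~\ref{prop:flip:papangelou:mu} was proved. Given the conventions fixed earlier in the section (each flip is reversed by another flip, and $\F{\TT}$ is exactly the set of flips applicable to $\TT$), this consistency is automatic, and the proof reduces to the two displayed lines above.
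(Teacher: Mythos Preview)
Your proposal is correct and follows exactly the route the paper intends: the corollary is obtained by combining the defining identities \eqref{eq:def:flip:campbell}--\eqref{eq:dev:flip:papangelou} with the value $\PFlip\equiv 1$ from Proposition~\ref{prop:flip:papangelou:mu}. The paper does not spell this out beyond stating that the corollary follows from the proposition, so your two-line derivation (plus the brief integrability remark) is precisely what is needed.
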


Hence expressions of Papangelou kernels for splits and flips indicate that a random T-tessellation $\TT\sim\mu$ shows minimal spatial dependency, suggesting that $\mu$ can be considered as the distribution of a completely random T-tessellation. Below this model of completely random T-tessellation will be referred to as the CRTT model.

\section{Gibbsian T-tessellations}
\label{sec:gibbs:extension}

Although the completely random T-tessellation model introduced in the previous section shows appealing features, it may not be appropriate for representing real life structures which may exhibit some kind of regularity. This section is devoted to Gibbsian extensions allowing to control a large spectrum of T-tessellation features. Gibbs random T-tessellations are defined as follows.
\begin{definition}
  \label{def:gibbs}
  Let $h$ be a stable non-negative functional on $\cT$. The Gibbs random T-tessellation with unnormalized density $h$ is the random T-tessellation with distribution
  \begin{equation}
    \label{eq:def:extended:model}
    \PP(dT) \propto h(T)\; \mu (dT),
  \end{equation}
  where the proportionality constant is defined such that $\PP$ has total mass equal to $1$.
\end{definition}
Equation~\eqref{eq:def:extended:model} defines a probabilistic measure on $\cT$ if and only if the unnormalized density $h$ has a non-null finite integral with respect to $\mu$. The latter condition is guaranteed by the assumed stability of $h$. Note that for point processes (in bounded domains) too, the integrability of unnormalized densities with respect to the Poisson distribution is guaranted by a stability condition, see e.g. \cite[Section 3.7]{geyer:_likel} or \cite[Chapter 3]{ruelle69:_statis_mechan}. Following the Gibbsian terminology, $-\log h$ is referred to as the energy function.

Extending results of Section~\ref{sec:random}, the split and flip Papangelou kernels can be derived for Gibbs T-tessellations. As for Gibbs point processes, such results are obtained under a kind of hereditary condition.
\begin{definition}
  \label{def:heredity}
  A Gibbs random T-tessellation with unnormalized density $h$ is hereditary if it fulfills the following two condititions :
  \begin{itemize}
  \item For any pair $(T,S)$, $T\in\cT$ and $S\in\S{T}$, $h(ST)=0\Rightarrow h(T)=0$.
  \item For any pair $(T,F)$, $T\in\cT$ and $F\in\F{T}$, $h(FT)=0\Rightarrow h(T)=0$.
  \end{itemize}
\end{definition}
\begin{proposition} \label{prop:gnz:gibbs}
  If $\TT$ is a Gibbs random T-tessellation with hereditary unnormalized density $h$, then its split and flip Papangelou kernels can be expressed as follows
  \begin{equation}
    \label{eq:papangelou:gibbs}
    \PSplit(T,ds) = \frac{h\left(T\cup\{s\}\right)}{h(T)}\;ds,\quad
    \PFlip(T,F) = \frac{h\left(FT\right)}{h(T)}.
  \end{equation}
  And the following two Georgii-Nguyen-Zessin formulae hold
  \begin{eqnarray}
    \label{eq:gnz:split}
    \mean\sum_{m\in\M{\TT}}\phi\left(m,\TT\setminus\{m\}\right) & = &
    \mean\int_{s\in\S{\TT}}\phi(s,\TT)\frac{h\left(\TT\cup\{s\}\right)}{h(\TT)}\;ds,\\
    \label{eq:gnz:flip}
    \mean\sum_{F\in\F{\TT}}\phi(F^{-1},F\TT) & = &
     \mean\sum_{F\in\F{\TT}}\phi(F,\TT)\frac{h\left(F\TT\right)}{h(\TT)},
  \end{eqnarray}
  taking 0/0=0.
\end{proposition}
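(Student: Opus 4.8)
The plan is to derive the Gibbs Papangelou kernels from the CRTT kernels (Propositions~\ref{prop:split:papangelou:mu} and~\ref{prop:flip:papangelou:mu}) by a change-of-measure argument, exactly as one passes from the Poisson to a Gibbs point process. Write $P(dT) = c\,h(T)\,\mu(dT)$ with $c$ the normalizing constant. For the split part, start from the definition of the split Campbell measure of $\TT \sim P$ and the key observation that for a non-blocking segment $m$ of $T$ one has, in the notation of~\eqref{eq:alt:split:gnz:mu}, $T = (T\setminus\{m\}) \cup \{m\}$, i.e.\ removing $m$ and then re-adding it as a split is the identity. Thus
\begin{align*}
  \CSplitMeas(\phi)
  &= \mean_{P}\sum_{m\in\M{\TT}}\phi(m,\TT\setminus\{m\})
   = c\,\mean_{\mu}\Bigl[h(\TT)\sum_{m\in\M{\TT}}\phi(m,\TT\setminus\{m\})\Bigr].
\end{align*}
Now apply the reformulated GNZ formula~\eqref{eq:alt:split:gnz:mu} for $\mu$ to the test function $(m,T')\mapsto h(T'\cup\{m\})\,\phi(m,T')$ (after identifying merges with their inverse splits as in the text): the left-hand side of~\eqref{eq:alt:split:gnz:mu} is precisely the bracketed $\mu$-expectation above, since $h\bigl((\TT\setminus\{m\})\cup\{m\}\bigr) = h(\TT)$, so it equals $\mean_{\mu}\int_{\S{\TT}}h(\TT\cup\{s\})\,\phi(s,\TT)\,ds$. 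Reinserting the density, $\mean_{\mu}[\,\cdot\,] = \mean_{P}[h(\TT)^{-1}\,\cdot\,]$ on the event $h(\TT)>0$, which gives $\CSplitMeas(\phi) = \mean_{P}\int_{\S{\TT}}\phi(s,\TT)\,\frac{h(\TT\cup\{s\})}{h(\TT)}\,ds$. Comparing with the defining identity~\eqref{eq:dev:split:papangelou} of $\PSplit$ yields the first formula in~\eqref{eq:papangelou:gibbs} and, taking $\phi$ arbitrary, the GNZ formula~\eqref{eq:gnz:split}.

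The flip part is entirely analogous, working from~\eqref{eq:def:flip:campbell} and Corollary~\ref{cor:flip:gnz:mu}: one has $\CFlipMeas(\phi) = c\,\mean_{\mu}[h(\TT)\sum_{F\in\F{\TT}}\phi(F^{-1},F\TT)]$, then applies the flip GNZ formula~\eqref{eq:flip:gnz:mu} for $\mu$ with test function $(F,T)\mapsto h(T)\,\phi(F,T)$; using that $F$ is an involution on configurations ($F^{-1}$ applied to $F\TT$ returns $\TT$, and the map $F\mapsto F^{-1}$ is a bijection of $\F{\TT}$ onto $\F{F\TT}$ with the appropriate reindexing) one rewrites the sum and recovers $\CFlipMeas(\phi) = \mean_{P}\sum_{F\in\F{\TT}}\phi(F,\TT)\,\frac{h(F\TT)}{h(\TT)}$, hence the second formula of~\eqref{eq:papangelou:gibbs} and~\eqref{eq:gnz:flip}.

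Two points need care, and the heredity hypothesis is exactly what makes them work. First, the division by $h(\TT)$ is only legitimate where $h(\TT)>0$; on $\{h(\TT)=0\}$ the measure $P$ puts no mass, so every $P$-expectation is unaffected, but one must check the integrand on the right-hand sides is well defined there too — this is where the convention $0/0=0$ enters, and heredity guarantees that whenever the denominator $h(T)$ vanishes the relevant numerators $h(T\cup\{s\})$ (resp.\ $h(FT)$) that could contribute are handled consistently, so no spurious $\infty$ or undefined term arises. Second, one must verify the bookkeeping in the re-indexing: for splits, that every $s\in\S{T}$ with $h(T\cup\{s\})>0$ corresponds to a non-blocking segment of $T\cup\{s\}$ whose removal is the merge $M$ with $M^{-1}=s$ (true by construction of a split), and for flips, that $F\mapsto F^{-1}$ sets up the bijection between $\F{\TT}$ and $\bigsqcup$-indexed flips of $F\TT$. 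I expect this second, combinatorial/measurability bookkeeping — making the informal ``identify merges with inverse splits'' precise and ensuring Fubini applies so the $\mu$-integral and the $\mathbb{E}$ can be interchanged (licensed by the stability of $h$ together with Theorem~\ref{thm:exponential:moment}) — to be the only real obstacle; the algebra of the change of measure itself is routine.
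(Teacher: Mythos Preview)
Your argument is exactly the change-of-measure computation the paper intends: its entire proof of Proposition~\ref{prop:gnz:gibbs} is the single sentence that the result is a straightforward consequence of Propositions~\ref{prop:split:papangelou:mu}--\ref{prop:flip:papangelou:mu} and Corollaries~\ref{cor:split:gnz:mu}--\ref{cor:flip:gnz:mu}, and you have simply written that consequence out in detail. Your identification of heredity as the hypothesis that legitimises the passage back from the $\mu$-expectation to the $P$-expectation (and the $0/0$ convention) is also the right place to locate the one nontrivial point.
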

Proposition~\ref{prop:gnz:gibbs} is a straightforward consequence of propositions \ref{prop:split:papangelou:mu}--\ref{prop:flip:papangelou:mu} and corollaries \ref{cor:split:gnz:mu}--\ref{cor:flip:gnz:mu}.

\begin{example} 
  \label{ex:crt2}
  A first simple example is obtained with an energy function
  \begin{equation}
    \label{eq:crt2:density:scaling}
    -\log h(T) = -\nseint{T}\log\tau,
  \end{equation}
  which depends only on the number of internal segments (i.e.\ the number of supporting lines). The parameter $\tau$ may be tuned in order to control the number of cells. For the energy function \eqref{eq:crt2:density:scaling}, the Papangelou kernels are given by
  \begin{equation*}
    \PSplit(T,ds) = \tau\;ds,\quad
    \PFlip(T,F) \equiv 1.
  \end{equation*}
Note that the Papangelou kernels do not depend on $T$. Therefore the model defined by the energy \eqref{eq:crt2:density:scaling} behaves like the completely random T-tessellation introduced in Section~\ref{sec:random} up to the scaling parameter $\tau$. Below this model is referred to as the completely random T-tessellation with scaling parameter $\tau$.
A realization of such a model is shown in Figure~\ref{fig:ttessel:crt2}. It was generated using the Metropolis-Hastings-Green algorithm further described in Section~\ref{sec:simulation}. 
\end{example}
\begin{example} 
  \label{ex:acs}
  The energy function
  \begin{equation}
    \label{eq:acs:density}
    -\log h(T) = \frac{\tau}{\pi}l(T)+\nveint{T}\log 2-\nseint{T}\log\tau
  \end{equation}
  yields a model of T-tessellation proposed by Arak et al.\
  \cite{ArakClifSurg93}. In their paper, Arak et al.\ introduced a
  general probabilistic model for planar random graphs. This model
  involves 2 scalar and 3 functional parameters. Depending on these
  parameters, the model may yield segment patterns, polyline patterns,
  polygons or T-tessellations. A realization of the
  Arak-Clifford-Surgailis model is shown in
  Figure~\ref{fig:ttessel:acs}. The parameter $\tau$ was adjusted in order to yield T-tessellations with approximately the same mean number of cells as the CRTT model shown in Figure~\ref{fig:ttessel:crt2}.

  For the energy \eqref{eq:acs:density},
  the normalizing constant (partition function) has a simple
  analytical expression. The parameter $\tau$ is a scaling
  parameter. It can be checked that the linear intensity of this
  random T-tessellation is equal to $\tau$. Furthermore, the model has
  a Markov spatial property in the sense that the conditional
  distribution of $\TT$ on a region $B$ given the complement $D\setminus B$ depends on the complement only through its
  intersection with the boundary of $B$. Also Arak et al.\ provided a
  one-pass algorithm for simulating their model. Miles and
  Mackisack~\cite{ref/947} have derived the distributions of edge and
  segment lengths and of cell areas.

  The split and flip Papangelou kernels of the Arak-Clifford-Surgailis model can be computed from equations \eqref{eq:papangelou:gibbs} and \eqref{eq:acs:density}. In particular, for a splitting segment $s$ with no end lying on the boundary of the domain $D$, the split Papangelou kernel has a log-density equal to
  \begin{equation*}
    \log\frac{\PSplit(T,ds)}{ds} = -\frac{\tau}{\pi}l(s)-2\log 2+\log\tau,
  \end{equation*}
  where $l(s)$ is the segment length. Compared to the CRTT model, splits by short (non-blocking) segments are favoured. A splitting segment is short if the splitted cell is small or if it lies at the periphery of large cells. Therefore one may expect ACS tessellations to show more small cells than CRTT. This was checked numerically. Large samples of realizations of both the CRTT and the ACS model were generated. Cell area distributions were plotted as Lorenz curves (fraction of smallest cells versus area fraction of the covered domain), see Figure \ref{fig:stats}. Plots confirmed that the ACS model tends to yield more small cells than the CRTT model. 
\end{example}
\begin{example} \label{ex:areas} 
In order to obtain T-tessellations with less dispersed cell areas, one may consider the following energy function
\begin{equation}
  \label{eq:energy:areas}
  -\log h(T) = - \nseint{T}\log\tau + \alpha a^2(T),
\end{equation}
where $a^2(T)$ is the sum of squared cell areas for the tessellation
$T$. The statistic $a^2(T)$ is minimal for equal size cells. Furthermore note that since $a^2(T)\leq a(D)^2$, the energy \eqref{eq:energy:areas} is stable. Therefore, the probabilistic measure \eqref{eq:def:extended:model} is well-defined for energy \eqref{eq:energy:areas}. A realization of this model is shown in Figure~\ref{fig:ttessel:areas}. The values of $\tau$ and $\alpha$ were adjusted in order to produce approximately the same mean number of cells as in figures \ref{fig:ttessel:crt2} and \ref{fig:ttessel:acs}. The Lorenz curve of Figure \ref{fig:stats} confirmed that the area penalty in \eqref{eq:energy:areas} yields much less small cells than under the CRTT model.
\end{example}
\begin{example}
  \label{ex:angles}
  In order to control angles between incident segments, one may consider the following energy
  \begin{equation}
    \label{eq:energy:angles}
    -\log h(T) = - \nseint{T}\log\tau - \beta \sum_{v \in\sve{T}} \phi(v),
  \end{equation}
  where $\phi(v)$ denotes the smallest angle between two edges incident to $v$ ($\phi(v)$ is close to $\pi/2$ if the segments meeting at $v$ are almost perpendicular). Since the angle $\phi(v)$ is bounded, the energy~\eqref{eq:energy:angles} is stable. A realization of this model is shown in Figure~\ref{fig:ttessel:angles}. Histograms of angles measured from simululated tessellations are shown in Figure \ref{fig:stats}. The angle distribution is much more concentrated towards $\pi/2$ than under the CRTT model.
\end{example}
\begin{figure}
  \centering
  \subfloat[\label{fig:ttessel:crt2}CRTT.  \protect$\tau=1.9$.]{\includegraphics[width=4cm]{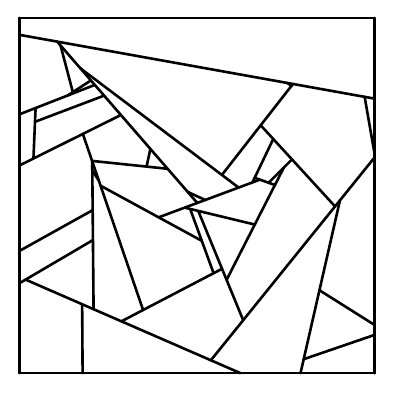}}\hspace{1cm}
  \subfloat[\label{fig:ttessel:acs}ACS. \protect$\tau=10.75$.]{\includegraphics[width=4cm]{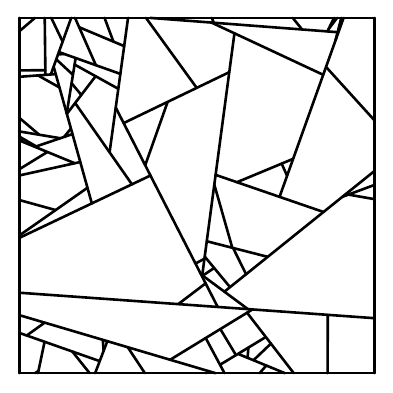}}\\
  \subfloat[\label{fig:ttessel:areas}Area penalty. \protect$\tau=0.043$, \protect$\alpha=\numprint{10000}$.]{\includegraphics[width=4cm]{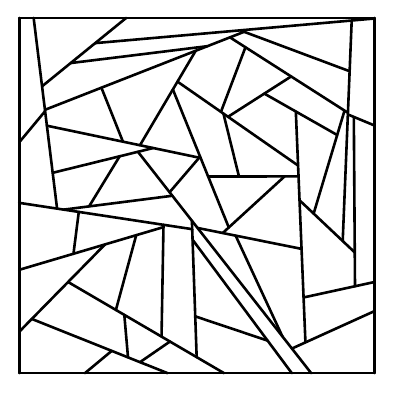}}\hspace{1cm}
  \subfloat[\label{fig:ttessel:angles}Angle penalty, \protect$\tau=12.1$, \protect$\beta=2.5$.]{\includegraphics[width=4cm]{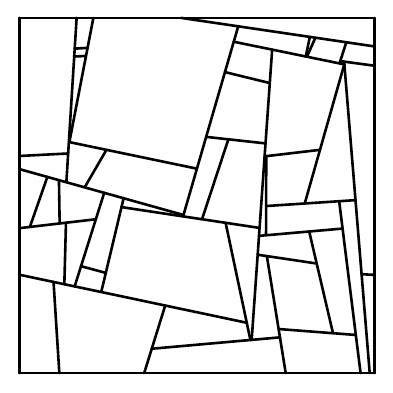}}
  \caption{Realizations of four model examples.}
  \label{fig:realization:alpha:beta}
\end{figure}
\begin{figure}
  \includegraphics{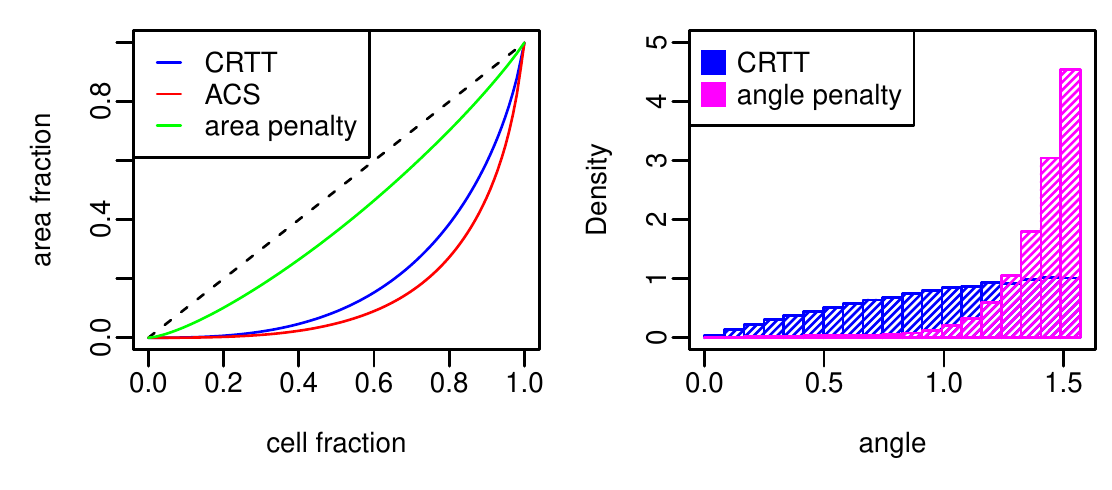}
  \caption{Statistical comparisons between three Gibbs model examples and the CRTT model. Left: distributions of cell areas of the CRTT, ACS and squared area penalty models shown as Lorenz curves (\protect$x$-axis : fraction of smallest cells, \protect$y$-axis : covered area fraction, discontinuous : theoretical curve for cells with uniformly distributed areas). Right : distribution  of acute angles between incident segments of the CRTT and angle penalty model.}
  \label{fig:stats}
\end{figure}

\section{A Metropolis-Hastings-Green simulation algorithm}
\label{sec:simulation}

In this section, we derive a simulation algorithm for random Gibbs
 T-tessellations. This algorithm is a
special case of the ubiquitous Metropolis-Hastings-Green algorithm, see
e.g.\ \cite{ref/156, ref/1113, geyer:_likel}. It consists in
designing a Markov chain with state space $\Te$ and with invariant
distribution the target probability measure $P$.

The design of a Metropolis-Hastings-Green algorithm involves two basic ingredients: random proposals of updates and rules for accepting or rejecting updates. Here, three types of updates are considered: splits, merges and flips. Proposition kernels based on these updates are not absolutely continuous with respect to the reference measure $\mu$. Thus like the birth-death Metropolis-Hastings algorithm used to simulate Gibbs point processes, the simulation algorithm described below is an instance of Green extension of Metropolis-Hastings algorithm. Rules for accepting updates are based on Hastings ratios. The computation of Hastings ratios involves ratios of densities with respect to symmetric measures on pairs of T-tessellations which differ by a single update.

\subsection{Symmetric measures on pairs of tessellations}
\label{sec:smf:sym:measures}

The space of pairs of T-tessellations of the domain $D$ which differ
only by a single split, merge or flip is denoted $\Te_\text{smf}^2$.
That space can be decomposed into two subspaces $\Te_\text{sm}^2$
(pairs which differ by a split or a merge) and $\Te_\text{f}^2$ (pairs
which differ by a flip). 
As a preliminary to the design of the
simulation algorithm, symmetric measures on $\Te_\text{sm}^2$ and on
$\Te_\text{f}^2$ are required. A measure $\nu$ on say $\Te_\text{sm}^2$ is
symmetric if for any measurable subset $A\subset\Te_\text{sm}^2$, the
following equality holds
\begin{equation*}
  \int_{\Te_\text{sm}^2} \mathbf{1}_A(T_1,T_2)\;\nu(dT_1,dT_2) =
  \int_{\Te_\text{sm}^2} \mathbf{1}_A(T_2,T_1)\;\nu(dT_1,dT_2).
\end{equation*}

The following result is a direct consequence of the Georgii-Nguyen-Zessin formulae for the completely random T-tessellation.
\begin{proposition} \label{prop:symmetric:measures}
Let $\TT\sim \mu$. The measures $\mu_\text{sm}$ on  $\Te_\text{sm}^2$ and $\mu_\text{f}$ on  $\Te_\text{f}^2$ defined by
\begin{equation}
  \label{eq:def:sym:meas:sm}
  \mu_\text{sm}(A) = \mean \int_{\mathcal{S}_{\TT}} 
  \mathbf{1}_A(\TT,S\TT)\;dS  + \mean \sum_{M\in\mathcal{M}_{\TT}} 
  \mathbf{1}_A(\TT,M\TT),
  \quad A\subset\Te_\text{sm}^2,
\end{equation}
and 
\begin{equation}
  \label{eq:def:sym:meas:f}
  \mu_\text{f}(A) = \mean\sum_{F\in\mathcal{F}_{\TT}} 
  \mathbf{1}_A(\TT,F\TT),
  \quad A\subset\Te_\text{f}^2,
\end{equation}
are symmetric.
\end{proposition}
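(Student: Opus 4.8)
The plan is to write the claimed symmetry of $\mu_\text{sm}$ as the explicit identity
\[
\mean\int_{\S{\TT}}\mathbf{1}_A(\TT,S\TT)\,dS+\mean\sum_{M\in\M{\TT}}\mathbf{1}_A(\TT,M\TT)
=\mean\int_{\S{\TT}}\mathbf{1}_A(S\TT,\TT)\,dS+\mean\sum_{M\in\M{\TT}}\mathbf{1}_A(M\TT,\TT),
\]
to be proved for every measurable $A\subset\Te_\text{sm}^2$ (note that the pairs $(\TT,S\TT)$, $(\TT,M\TT)$ and their reversals all lie in $\Te_\text{sm}^2$), and to establish it by matching the split term on the left with the merge term on the right and the merge term on the left with the split term on the right, each time via the Georgii--Nguyen--Zessin formula for splits and merges.

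Concretely, I would apply the reformulated identity~\eqref{eq:alt:split:gnz:mu} to the bounded measurable test function $\phi(S,T)=\mathbf{1}_A(T,ST)$ on $\CSplitSpace$: its right-hand side is exactly $\mean\int_{\S{\TT}}\mathbf{1}_A(\TT,S\TT)\,dS$, while its left-hand side is $\mean\sum_{M\in\M{\TT}}\mathbf{1}_A\bigl(M\TT,M^{-1}(M\TT)\bigr)=\mean\sum_{M\in\M{\TT}}\mathbf{1}_A(M\TT,\TT)$, using that applying the split $M^{-1}$ to $M\TT$ returns $\TT$. Choosing instead $\phi(S,T)=\mathbf{1}_A(ST,T)$ gives in the same way $\mean\sum_{M\in\M{\TT}}\mathbf{1}_A(\TT,M\TT)=\mean\int_{\S{\TT}}\mathbf{1}_A(S\TT,\TT)\,dS$. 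Adding the two equalities yields the displayed identity, hence the symmetry of $\mu_\text{sm}$. For $\mu_\text{f}$, the same scheme with Corollary~\ref{cor:flip:gnz:mu} applied to $\phi(F,T)=\mathbf{1}_A(FT,T)$ on $\CFlipSpace$ turns its right-hand side into $\mean\sum_{F\in\F{\TT}}\mathbf{1}_A(F\TT,\TT)$ and its left-hand side into $\mean\sum_{F\in\F{\TT}}\mathbf{1}_A\bigl(F^{-1}(F\TT),F\TT\bigr)=\mean\sum_{F\in\F{\TT}}\mathbf{1}_A(\TT,F\TT)=\mu_\text{f}(A)$, since $F^{-1}(F\TT)=\TT$; this is exactly the symmetry of $\mu_\text{f}$.

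The calculation is essentially bookkeeping. The one point that deserves care is the index correspondence underlying each Georgii--Nguyen--Zessin formula --- that a merge is reversed by the split $M^{-1}$, so that $M^{-1}(MT)=T$, and a flip by the flip $F^{-1}$, so that $F^{-1}(FT)=T$ --- together with checking that the composed maps such as $(S,T)\mapsto(T,ST)$ are measurable, so that the chosen $\phi$ are admissible test functions. No integrability obstruction arises: $\mu$ is a probability measure and $\mathbf{1}_A$ is bounded, so the formulae of Corollaries~\ref{cor:split:gnz:mu} and~\ref{cor:flip:gnz:mu} apply directly.
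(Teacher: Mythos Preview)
Your proposal is correct and follows essentially the same route as the paper: the paper also derives the symmetry of $\mu_\text{sm}$ by applying the split Georgii--Nguyen--Zessin identity~\eqref{eq:alt:split:gnz:mu} twice, with the test functions $\phi(S,T)=\mathbf{1}_A(ST,T)$ and $\phi(S,T)=\mathbf{1}_A(T,ST)$, matching each split term with the corresponding merge term. Your treatment of $\mu_\text{f}$ via Corollary~\ref{cor:flip:gnz:mu} is in fact more explicit than the paper's appendix, which only writes out the $\mu_\text{sm}$ case.
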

A detailed proof is given in Appendix \ref{appendix:proof:prop:symmetric:measures}.

\subsection{The algorithm}
\label{sec:algorithm}

The proposed algorithm proceeds by successive updates. It involves two proposition kernels. The first kernel is based on splits and merges. It is quite similar to births and deaths used for simulating point processes. Consider two non-negative
functions $p_\text{s}$ and $p_\text{m}$ defined on $\Te$ such that $p_\text{s}+p_\text{m}\leq 1$. For any T-tessellation $T$, $p_\text{s}(T)$ (resp.\ $p_\text{m}(T)$) is the probability for considering applying a split (resp.\ merge) to $T$. If $p_\text{s}(T)+p_\text{m}(T)<1$, with probability $1-p_\text{s}(T)-p_\text{m}(T)$, no update is considered.

The rules for selecting an update of a given type are determined by two families of non-negative functions $q_\text{s}$ and $q_\text{m}$. The function $q_\text{s}$ is defined on the set of pairs $(T,S)$ where $T\in\Te$ and $S\in\S{T}$. For any T-tessellation $T$, $q_\text{s}(T,.)$ is supposed to be a probability density with respect to the uniform measure on $\S{T}$ defined by Equation~\eqref{eq:def:dS}. Concerning merges, $q_\text{m}(T,.)$ is just a discrete distribution (finite family of probabilities) defined on $\M{T}$.

Consider the following proposition kernel on $\cT$:
\begin{equation*}
  Q_{\text{sm}}(T,d\tilde{T})  =  \left\{
  \begin{array}{lll}
    p_{\st}(T) q_{\st}(T,ST)\;dS & \text{if }\tilde{T}=ST, & S\in\S{T},\\
    p_{\mt}(T) q_{\mt}(T,MT) & \text{if }\tilde{T}=MT, & M\in\M{T}.
  \end{array}
  \right.
\end{equation*}
Acceptation of an update is based on the following Hastings ratio
\begin{equation*}
  \frac{h(\tilde{T})}{h(T)} \frac{q_{\text{sm}}(\tilde{T},T)}{q_{\text{sm}}(T,\tilde{T})},
\end{equation*}
where $q_{\text{sm}}(\tilde{T},T)$ is the density of $Q_{\text{sm}}\mu$ with respect to the symmetric measure $\mu_{\text{sm}}$. It can be easily checked that
\begin{equation*}
  q_{\text{sm}}(T,d\tilde{T})  =  \left\{
  \begin{array}{lll}
    p_{\st}(T) q_{\st}(T,ST) & \text{if }\tilde{T}=ST, & S\in\S{T},\\
    p_{\mt}(T) q_{\mt}(T,MT) & \text{if }\tilde{T}=MT, & M\in\M{T}.
  \end{array}
  \right.
\end{equation*}
Therefore the Hastings ratio for splits and merges can be expressed as 
\begin{equation}
  \label{eq:def:hastings}
  r_\text{t}(T,U) =
  \frac{h(UT)}{h(T)}
  \frac{p_{t^{-1}}(UT)}{p_t(T)}
  \frac{q_{t^{-1}}(UT,U^{-1})}{q_t(T,U)},
\end{equation}
where $t\in\{\text{s},\text{m}\}$, $\text{s}^{-1}=\text{m}$,
$\text{m}^{-1}=\text{s}$, $U\in\S{T}$ if $t=\text{s}$ and
$U\in\M{T}$ if $t=\text{m}$.

Using splits and merges, one would explore only a subspace of $\cT$: nested T-tessellations. Hence, updates based on flips are required. Let $p_\ft=1-p_\st-p_\mt$ be the probability of trying to flip the current tessellation. And, for any $T\in\cT$, let $q_\ft(T,.)$ be a finite distribution defined on $\F{T}$. The pair $(p_\ft,q_\ft)$ defines a proposition kernel $Q_\ft$. It is easy to check that $Q_\ft\mu$ is absolutely continuous with respect to the symmetric measure $\mu_{\text{f}}$ with density $p_\ft q_\ft$. Furthermore, the Hastings ratio for flips can be expressed as in Equation~\eqref{eq:def:hastings} where $\text{t}=\ft$, $\ft^{-1}=\ft$ and $U\in\F{T}$.

The simulation algorithm requires as inputs: the unnormalized density
$h$ of the target distribution, an initial T-tessellation of the
domain $D$, the triplet
$(p_\text{s},p_\text{m},p_\text{f})$ and the triplet
$(q_\text{s},q_\text{m},q_\text{f})$.

Each iteration consists of the following steps:
\begin{enumerate}
\item The current T-tessellation is $T_n$.
\item Draw at random the update type $t$ from
  $\{\text{s},\text{m},\text{f}\}$ with probabilities
  $p_\text{s}(T_n)$, $p_\text{m}(T_n)$ and $p_\text{f}(T_n)$.
\item If there is no update of type $t$ applicable to $T_n$, $T_{n+1}=T_n$. 
\item Draw at random an update $U$ of type $t$ according to the
  distribution defined by $q_t(T_n,.)$.
\item Compute the Hastings ratio $r_{\text{t}}(T_n,U)$.
\item Accept update ($T_{n+1}=UT_n$) with probability
  \begin{equation*}
    \min\{1,r_{\text{t}}(T_n,U)\}.
  \end{equation*}
\end{enumerate}

Let $\mathbf{T}_n$ be the Markov chain defined by the algorithm
above. By construction $\mathbf{T}_n$ is reversible with respect to
the target distribution $P$. Further properties of $\mathbf{T}_n$ are
discussed in Section \ref{sec:simulation:convergence}.

\begin{example}
  \label{ex:simple:smf}
  A simple version of the algorithm is obtained by choosing
  probabilities $p_\st$, $p_\mt$ and $p_\ft$ not depending on $T$ and
  by using uniform random proposals of updates. Define
  \begin{eqnarray}
    \label{eq:q:split:unif}
    q_\text{s}(T,S) & = & \frac{\pi}{2l(T)-l(D)},\\
    \label{eq:q:merge:unif}
    q_\text{m}(T,M) & = & \frac{1}{\nnbseint{T}},\\
    \label{eq:q:flip:unif}
    q_\text{f}(T,F) & = & \frac{1}{2 \nbseint{T}},
  \end{eqnarray}
  in order to get uniformly distributed splits, merges and flips. For
  a split $S$, the Hastings ratio is
  \begin{equation*}
    r_\text{s}(T,S)
    =
    \frac{h(ST)}{h(T)}\frac{p_\mt}{p_\st}
    \frac{2l(T)-l(D)}{\pi(\nnbseint{T}+1-\xi)}
  \end{equation*}
  where $\xi\in\{0,1,2\}$ is the number of internal non-blocking
  segments of $T$ incident to the ends of the splitting
  segment. Similary, for a merge the Hastings ratio is
  \begin{equation*}
    r_\text{m}(T,M)
    =
    \frac{h(MT)}{h(T)}\frac{p_\st}{p_\mt}
    \frac{\pi \nnbseint{T}}{2(l(T)-l(M))-l(D)},
  \end{equation*}
  where $l(M)$ is the length of the edge removed by $M$. For a
  flip $F$, the Hastings ratio is
  \begin{equation*}
    r_\text{f}(T,M)
    =
    \frac{h(FT)}{h(T)}
    \frac{\nbseint{T}}{\nbseint{FT}},
  \end{equation*}
  where $\nbseint{FT}-\nbseint{T}$ varies from $-2$ up to $2$. 

  The computation of Hastings ratios requires to keep track of the
  numbers of blocking and non-blocking segments, the total length of
  internal edges. Also, one needs to predict the variations of the
  non-blocking segment number induced by a split and of the blocking
  segment number induced by a flip. Finally one must also predict the
  variation of the unnormalized density $h$ induced by any of the three
  updates.
\end{example}

Simulations based on Metropolis-Hastings-Green algorithms are commonly monitored based on plots as those shown in Figure~\ref{fig:simul:monitoring}. The CRTT model and the model of Example~\ref{ex:areas} were considered. Additionally, another model combining both penalizations of examples \ref{ex:areas} and \ref{ex:angles} was also considered. The values of parameters $\alpha$ and $\beta$ were chosen in order to produce rather rectangular cells. A realization of the latter model is shown in Figure \ref{fig:ttessel:landscape}.
\begin{figure}
  \centering
  \includegraphics[width=4cm]{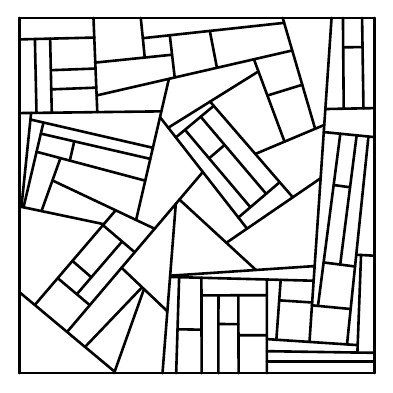}
  \caption{A realization of the model obtained by combining energy terms involved in example models \protect\ref{ex:areas} and \protect\ref{ex:angles}. Parameter values: \protect$\tau=2.0$, \protect$\alpha=\numprint{93000}$, \protect$\beta=200$.}
  \label{fig:ttessel:landscape}
\end{figure}
\begin{figure}
  \centering
  \subfloat[\label{fig:monitor:crt2}CRTT]{\includegraphics{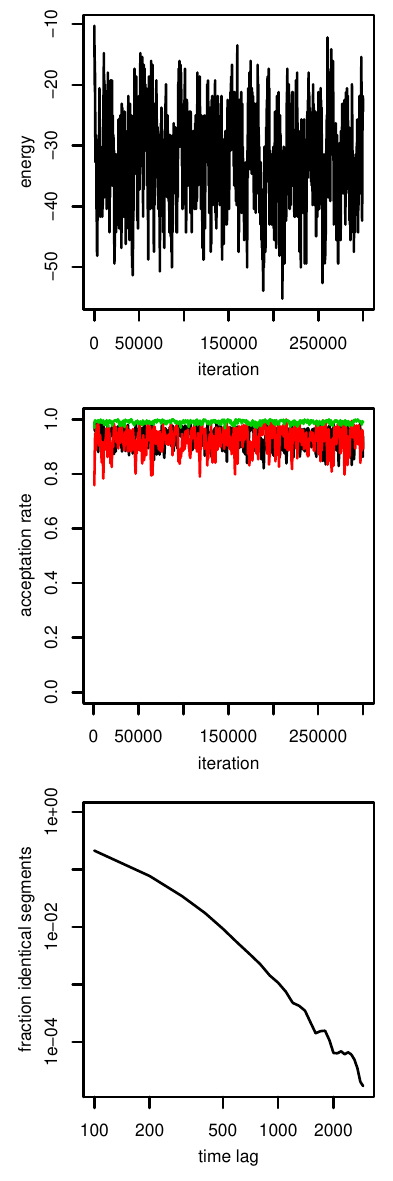}}
  \subfloat[\label{fig:monitor:areas}Model of Example \protect\ref{ex:areas}]{\includegraphics{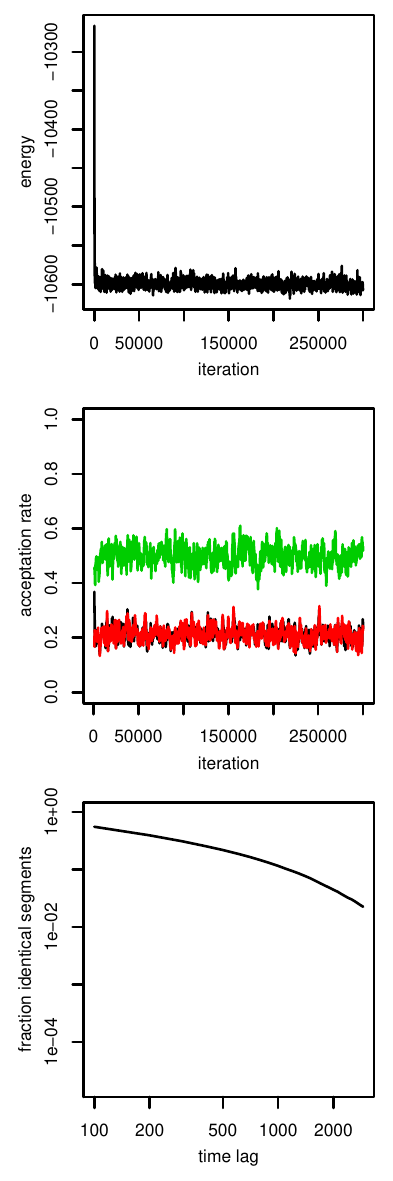}}
  \subfloat[\label{fig:monitor:landscape}Example \protect\ref{ex:areas}+\protect\ref{ex:angles}]{\includegraphics{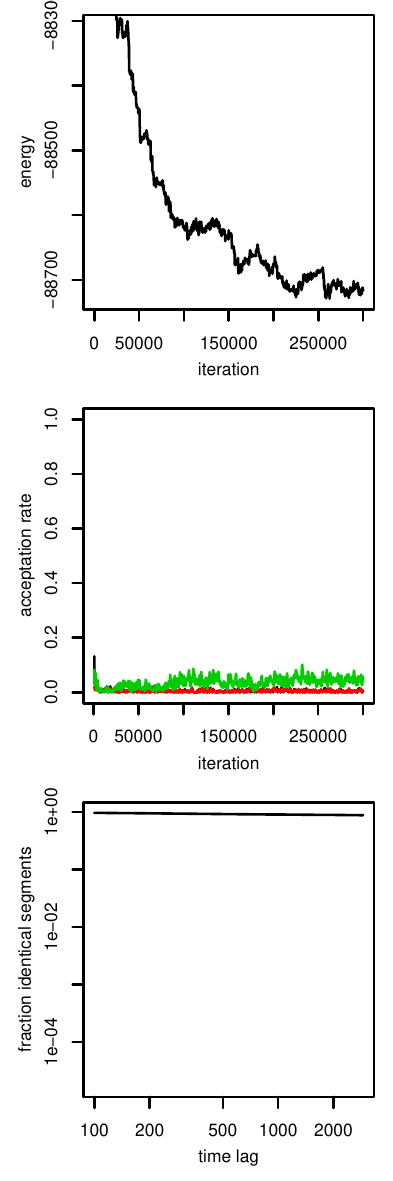}}
  \caption{Plots for monitoring simulations. Top row: time-evolution of the energy. Middle row: time-evolution of acceptation rates (splits in black, merges in red, flips in green). Bottom row: fraction of common segments between two tessellations versus time lag (log-log scale).}
  \label{fig:simul:monitoring}
\end{figure}

As a first monitoring tool, one can follow the time-evolution of the energy. A convenient starting initial tessellation to start with is the empty tessellation. The empty tessellation may have quite a high energy. Obviously the Markov chain should not be sampled before the energy stabilizes at its equilibrium level. The time needed for burnin is heavily dependent on the form of the energy function. As shown by Figure~\ref{fig:simul:monitoring}, burnin is very short for the CRTT model: less than \numprint{1000} iterations. Burnin is also short for the model with penalty on cell area variability (Example~\ref{ex:areas}), although the initial empty tessellation shows a very high energy compared to the energy value at equilibrium. For a model with very severe penalties like the one combining penalties of examples \ref{ex:areas} and \ref{ex:angles} (realization shown in Figure~\ref{fig:ttessel:landscape}), burnin takes several hundreds of thousands iterations. 

Acceptation rates are also quite informative. As expected, the CRTT model is easy to simulate with very high acceptation rates. More constrained models show lower acceptation rates. In model of Example \ref{ex:areas}, unbalanced splits generating small cells or merges creating large cells are often rejected. For an extreme model such as the one combining examples \ref{ex:areas} and \ref{ex:angles}, the acceptation rates drop down at rather low values. In such a case, it might be worthwhile to consider more state dependent proposals than those of Example \ref{ex:simple:smf}. For instance, one could try to propose more central splits.

In general, the simulation Markov chain shows time-correlation: two successive T-tessellations differ only locally. Therefore, the Markov chain is commonly subsampled. The sampling period to be used depends on the range of time-correlations. As an empirical method for assessing time correlations, one may consider the percentage of segments left unchanged at different time lags. Plots from Figure~\ref{fig:simul:monitoring} show contrasted situations. For the CRTT model, after \numprint{500} iterations, about only \numprint[\%]{1} of segments are left unchanged.  For the model of Example~\ref{ex:areas}, such an updating rate requires more than \numprint{2000} iterations. The last model shows a very poor updating: very long runs are needed in order to obtain uncorrelated realizations.

\subsection{Convergence}
\label{sec:simulation:convergence}

By construction, see \cite{ref/1113} or \cite{geyer:_likel} for more
details, the Markov chain $\mathbf{T}_n$ defined in Section
\ref{sec:algorithm} is reversible with respect to the target
distribution $P\propto h\mu$. In order to get a convergence result,
one needs to establish that $\mathbf{T}_n$ is also irreducible and
aperiodic, see e.g.\ \cite{ref/931, ref/938}.

\begin{proposition}
  \label{prop:irreducibility}
  Under both conditions given below, the Markov chain $\mathbf{T}_n$
  is irreducible.
  \begin{enumerate}
  \item For any T-tessellation $T$ that can be merged, there exists a
    merge $M$ such that
    \begin{equation}
      \label{eq:cond:1:irreducibility}
      p_{\mt}(T)q_{\mt}(T,M)>0 \text{ and } h(MT)p_{\st}(MT)q_{\st}(MT,M^{-1})>0.
    \end{equation}
  \item For any T-tessellation $T$ that can be flipped and any flip
    $F\in\F{T}$,
    \begin{equation}
      \label{eq:cond:2:irreducibility}
      h(T)p_{\ft}(T)q_{\ft}(T,F)>0.
    \end{equation}
  \end{enumerate}
\end{proposition}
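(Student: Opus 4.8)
The plan is to show that from any T-tessellation $T$ with $h(T)>0$, the Markov chain can reach any other T-tessellation $T'$ with $h(T')>0$ in finitely many steps with positive probability; irreducibility (with respect to the relevant reference measure) then follows. The natural route is to go through the empty tessellation $T_D$ as a hub: first exhibit a positive-probability path $T \rightsquigarrow T_D$, then reverse the construction to get $T_D \rightsquigarrow T'$. Proposition \ref{prop:smf:complet:1} already guarantees, at the purely combinatorial level, that $T$ can be emptied by a finite sequence of merges and flips, and that $T_D$ can be brought to $T'$ by a finite sequence of splits and flips; what remains is to check that each transition in such a sequence is actually \emph{executed} by the algorithm with positive probability, i.e.\ that the proposal probabilities are positive and the Hastings acceptance probability $\min\{1,r_t\}$ is positive, the latter being equivalent to $r_t>0$.

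First I would handle the merge-and-flip path emptying $T$. Reading the recipe following Proposition \ref{prop:smf:complet:1}: one repeatedly removes a non-blocking internal segment; if none is available one performs flips until some blocking segment becomes non-blocking. For a flip step $T\to FT$ along this path, condition \eqref{eq:cond:2:irreducibility} gives $h(T)p_\ft(T)q_\ft(T,F)>0$, so the flip is proposed with positive probability; moreover by the hereditary assumption (Definition \ref{def:heredity}, second bullet) $h(FT)=0$ would force $h(T)=0$, contradicting $h(T)>0$, hence $h(FT)>0$ and the Hastings ratio $r_\ft(T,F)=\frac{h(FT)}{h(T)}\frac{p_\ft(FT)q_\ft(FT,F)}{p_\ft(T)q_\ft(T,F)}$ is a ratio of strictly positive quantities, so it is positive and the flip is accepted with positive probability. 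For a merge step $T\to MT$, I would use condition \eqref{eq:cond:1:irreducibility}: it directly supplies a merge $M$ with $p_\mt(T)q_\mt(T,M)>0$ and with $h(MT)p_\st(MT)q_\st(MT,M^{-1})>0$, which in particular gives $h(MT)>0$, so again $r_\mt>0$ and the merge is accepted with positive probability. A small point to check is that when the emptying recipe says ``remove some non-blocking segment'' we are free to use precisely the merge $M$ provided by \eqref{eq:cond:1:irreducibility} — this is fine because \emph{any} non-blocking segment removal keeps us on a valid emptying trajectory (the recipe is not sensitive to which one we pick). Iterating, $T$ reaches $T_D$ with positive probability, and along the way every intermediate tessellation has positive $h$.

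For the second half, $T_D \rightsquigarrow T'$, I would take the reverse of an emptying sequence for $T'$, which by Proposition \ref{prop:smf:complet:1} is a sequence of splits and flips. The flip steps are handled exactly as above. For a split step $T\to ST$ (with $ST$ an intermediate tessellation on the way up to $T'$, all having positive $h$ since they lie on an emptying path of $T'$ run backwards and $h(T')>0$ — here one uses the first bullet of Definition \ref{def:heredity} to propagate positivity of $h$ \emph{downward} along splits, i.e.\ if $h(ST)>0$ then, since merging $ST$ by the inverse merge returns $T$, heredity gives $h(T)>0$), the key is that the split must be proposable: since the reverse step is a merge $M=S^{-1}$ on $ST$ that appeared in $T'$'s emptying sequence, condition \eqref{eq:cond:1:irreducibility} applied at $ST$ yields $h(ST)p_\st(ST)q_\st(ST,M^{-1})>0$ with $M^{-1}=S$, i.e.\ $p_\st(T)q_\st(T,S)>0$ (here $T=M(ST)$), so $S$ is proposed with positive probability; and $h(ST)>0$ makes $r_\st>0$. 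Concatenating the two halves, $T$ reaches $T'$ with positive probability, which is irreducibility.

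The main obstacle is bookkeeping rather than anything deep: one must be careful that the combinatorial emptying/filling sequences furnished by Proposition \ref{prop:smf:complet:1} can be \emph{realigned} so that each of their steps is exactly of the form covered by conditions \eqref{eq:cond:1:irreducibility}–\eqref{eq:cond:2:irreducibility} (in particular, that the ``free choice'' of which non-blocking segment to merge lets us pick the specific $M$ those conditions provide, and that the split steps in the reversed sequence are precisely the inverses of such merges), and that heredity is invoked in the correct direction at each step to keep all intermediate $h$-values strictly positive. Once that alignment is set up, the positivity of every proposal probability and every Hastings ratio is immediate from the hypotheses, and finitely many positive-probability steps compose to a positive-probability transition, establishing irreducibility.
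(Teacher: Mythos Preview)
Your approach is the same as the paper's: route through the empty tessellation $T_D$ via the combinatorial path from Proposition~\ref{prop:smf:complet:1}, then check that each step is proposed and accepted with positive probability using conditions~\eqref{eq:cond:1:irreducibility}--\eqref{eq:cond:2:irreducibility}. The paper is slightly more economical in that it only establishes $\psi$-irreducibility with $\psi=\delta_{T_D}$ (i.e.\ from any $T$ the chain reaches $T_D$), which already suffices in the general-state-space sense, whereas you argue both legs $T\to T_D\to T'$.

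There is one point to correct. You appeal to heredity (Definition~\ref{def:heredity}), but heredity is not among the hypotheses of Proposition~\ref{prop:irreducibility}, and in the second half you invoke it in the wrong direction: the first bullet reads $h(ST)=0\Rightarrow h(T)=0$, i.e.\ $h(T)>0\Rightarrow h(ST)>0$, not the implication $h(ST)>0\Rightarrow h(T)>0$ that you use. Fortunately heredity is unnecessary. For flips, condition~\eqref{eq:cond:2:irreducibility} applied to the pair $(FT,F^{-1})$ directly yields $h(FT)p_\ft(FT)q_\ft(FT,F^{-1})>0$, hence $h(FT)>0$ and $r_\ft(T,F)>0$; this is exactly how the paper argues. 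For merges, choosing at each mergeable state the specific merge $M$ supplied by condition~\eqref{eq:cond:1:irreducibility} already forces $h(MT)>0$, so every intermediate state along the emptying path has positive $h$ by construction; reading the same condition at $ST$ then gives all the proposal and acceptance positivities needed for the reversed (split) path. With the heredity appeals removed and replaced by these direct uses of~\eqref{eq:cond:1:irreducibility}--\eqref{eq:cond:2:irreducibility}, your argument coincides with the paper's.
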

The detailed proof given in Appendix \ref{appendix:proof:prop:irreducibility} follows the same guidelines as the proof used for point processes, see e.g. \cite{geyer:_likel, ref/156}.

Note that the first condition in Proposition~\ref{prop:irreducibility}
is similar to the irreducibility condition involved in the birth and
death Metropolis-Hastings algorithm for the simulation of Gibbs point
processes. The second condition is somewhat stronger than the first
one since it must hold for all flips. In particular the density $h$
must keep strictly positive.

For the simple version of the simulation algorithm described in Example~\ref{ex:simple:smf}, both conditions of
Proposition~\ref{prop:irreducibility} hold if and only if the target
density $h$ is strictly positive.
\begin{proposition}
  \label{prop:aperiodicity}
  If either $p_\ft(T_D)>0$ or $p_\mt(T_D)>0$, the Markov chain
  $\mathbf{T}_n$ is aperiodic.
\end{proposition}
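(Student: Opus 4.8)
\textbf{Proof proposal for Proposition~\ref{prop:aperiodicity}.}

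The plan is to exhibit, from the empty tessellation $T_D$, a single iteration of the Markov chain that returns to $T_D$ with strictly positive probability; aperiodicity then follows because the chain is irreducible (so all states communicate and share a common period) and $T_D$ has a self-loop, forcing the period to be $1$. Concretely, I would argue that if $p_\ft(T_D)>0$ then a flip update is attempted at $T_D$, but $\F{T_D}=\emptyset$ since $T_D$ has no internal (blocking) segments; by step~3 of the algorithm, when no update of the drawn type is applicable the chain stays put, so $\mathbf{T}_{n+1}=T_D$ with probability at least $p_\ft(T_D)>0$. The same reasoning applies verbatim if $p_\mt(T_D)>0$, since $\M{T_D}=\emptyset$ ($T_D$ has no internal non-blocking segments to remove either).

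First I would recall from Section~\ref{sec:space} that the empty tessellation $T_D$ consists of a single cell and has $\nseint{T_D}=0$, hence $\nbseint{T_D}=\nnbseint{T_D}=0$; consequently both $\F{T_D}$ and $\M{T_D}$ are empty. Next I would invoke the transition mechanism described in Section~\ref{sec:algorithm}: the update type $t$ is drawn with probabilities $p_\st(T_D), p_\mt(T_D), p_\ft(T_D)$, and if the realized type has no applicable update the chain does not move. Under the hypothesis $p_\ft(T_D)>0$ (resp.\ $p_\mt(T_D)>0$), the event "$t=\ft$ (resp.\ $t=\mt$) is drawn" has positive probability and deterministically yields $\mathbf{T}_{n+1}=T_D$, so $P(\mathbf{T}_{n+1}=T_D \mid \mathbf{T}_n=T_D) \ge p_\ft(T_D)>0$ (resp.\ $\ge p_\mt(T_D)>0$).

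Finally I would combine this self-loop with the irreducibility established in Proposition~\ref{prop:irreducibility}: since the chain is irreducible, all states have the same period, and the period of $T_D$ divides the length of the length-$1$ cycle just constructed, hence equals $1$; therefore the chain is aperiodic. I do not expect any real obstacle here — the only point requiring care is the bookkeeping that $T_D$ admits neither a flip nor a merge, which is immediate from $\nseint{T_D}=0$, and the observation that aperiodicity is a class property so it suffices to check it at the single convenient state $T_D$. One could alternatively phrase the conclusion without explicitly invoking irreducibility by noting that a self-loop at any state already shows that the chain, restricted to its communicating class, has period $1$; either formulation is a one-line argument.
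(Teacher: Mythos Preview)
Your proposal is correct and follows essentially the same route as the paper: exhibit a self-loop at the empty tessellation $T_D$ by noting that $\F{T_D}=\M{T_D}=\emptyset$, so step~3 of the algorithm forces $\mathbf{T}_{n+1}=T_D$ whenever a flip (resp.\ merge) is drawn, which happens with probability $p_\ft(T_D)>0$ (resp.\ $p_\mt(T_D)>0$). Your version is slightly more explicit than the paper's in justifying why $T_D$ admits no flip or merge and in invoking irreducibility to propagate period~$1$ to the whole chain, but the core argument is identical.
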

\begin{proof}
  One needs to prove that there exists a T-tessellation $T$ such that
  the conditional probability that $\mathbf{T}_{n+1}=T$ given that
  $\mathbf{T}_n=T$ is positive. Consider the empty tessellation $T_D$.
  If $p_\ft(T_D)>0$, a flip of $T_D$ is considered with a positive
  probability. As there is no flip applicable to $T_D$,
  $\mathbf{T}_{n+1}=\mathbf{T}_n=T_D$. The same argument holds if
  $p_\mt(T_D)>0$.
\end{proof}

Since the Markov chain $\mathbf{T}_n$ is reversible with respect to
$P\propto  h\mu$ and in view of
propositions~\ref{prop:irreducibility} and \ref{prop:aperiodicity}, we
have the following convergence result:
\begin{proposition}
  \label{prop:convergence}
  Let $P$ be a distribution on $\Te$ with
  unnormalized density $h$ with respect to $\mu$. Under the
  conditions of propositions~\ref{prop:irreducibility} and
  \ref{prop:aperiodicity}, for $P$-almost any T-tessellation $T$ the
  conditional distribution of the Markov chain $\mathbf{T}_n$ given
  that $\mathbf{T}_0=T$ converges to $P$ in total variation.
\end{proposition}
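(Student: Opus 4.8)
The plan is to invoke the standard Markov chain convergence theorem: a Markov chain on a general state space that is reversible (hence has an invariant distribution) with respect to a probability measure $P$, and that is $P$-irreducible and aperiodic, converges to $P$ in total variation from $P$-almost every starting state. First I would recall that reversibility of $\mathbf{T}_n$ with respect to $P\propto h\mu$ was already established by construction in Section~\ref{sec:algorithm}, so $P$ is an invariant distribution for the chain. Next, under the hypotheses of Proposition~\ref{prop:irreducibility} the chain is $P$-irreducible (more precisely $\phi$-irreducible for a suitable maximal irreducibility measure equivalent to $P$), and under the hypothesis of Proposition~\ref{prop:aperiodicity} it is aperiodic. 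These three facts are exactly the ingredients needed.

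Then I would cite the general convergence theorem for Harris-type chains --- for instance the ergodic theorem for $\phi$-irreducible aperiodic chains possessing an invariant probability measure, as found in Meyn and Tweedie or in the references \cite{ref/931, ref/938} already cited in the paper. That theorem states: if a Markov chain is $\psi$-irreducible, aperiodic, and admits an invariant probability distribution $P$, then the chain is positive Harris recurrent and for $P$-almost every $x$ the $n$-step transition kernel $P^n(x,\cdot)$ converges to $P$ in total variation norm. Applying this with $x=T$ ranging over a set of full $P$-measure gives precisely the claimed statement about the conditional distribution of $\mathbf{T}_n$ given $\mathbf{T}_0=T$.

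The one point requiring a little care --- and the main (mild) obstacle --- is the passage from plain $\psi$-irreducibility to the ``$P$-almost every $T$'' form of convergence: in general, total variation convergence from \emph{every} starting state requires Harris recurrence, whereas from $P$-almost every starting state one only needs $\psi$-irreducibility plus aperiodicity plus an invariant probability measure. I would therefore state the convergence theorem in its ``almost everywhere'' form, which matches the conclusion of Proposition~\ref{prop:convergence} verbatim and sidesteps the need to verify Harris recurrence directly. Since the maximal irreducibility measure $\psi$ can be taken equivalent to $P$ (the chain can reach, with positive probability, any set of positive $P$-measure, as follows from the irreducibility argument in Appendix~\ref{appendix:proof:prop:irreducibility} together with the fact that $P$ is supported on the reachable part of $\cT$), the $P$-null exceptional set in the theorem is indeed $P$-null, completing the argument.
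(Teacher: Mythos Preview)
Your proposal is correct and matches the paper's approach: the paper does not give a separate proof of this proposition but simply states, in the sentence introducing it, that the result follows from reversibility together with propositions~\ref{prop:irreducibility} and~\ref{prop:aperiodicity}, citing the general references \cite{ref/931, ref/938}. Your write-up is in fact more detailed than the paper's, particularly in distinguishing the ``$P$-almost every starting state'' form of the ergodic theorem from the Harris-recurrent version, which is a useful clarification.
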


\section{Discussion}

The main feature of the completely random T-tessellation introduced in this paper is that both split and flip Papangelou kernels have very simple expressions showing a kind of lack of spatial dependency. It would be of high interest to further investigate this model. Since analytical probabilistic results are available for the Arak-Clifford-Surgailis (Example~\ref{ex:acs}), it is expected that such results could also be derived for our model. In particular, the following issues are of interest:
\begin{itemize}
\item Is there a simple expression for the normalizing constant $Z$ involved in Equation~\eqref{eq:def:mu}?
\item Does our model share the same Markovian property as the Arak-Clifford-Surgailis model?
\item Is it possible to derive an exact non-iterative simulation algorithm similar to the one proposed to Arak et al.\ \cite{ArakClifSurg93}?
\end{itemize}

It should be noticed that the choice of a reference measure for a class of Gibbs models is in some way a matter of convention. Since the CRTT model introduced in this paper is absolutely continuous with respect to the Arak-Clifford-Surgailis, one could use the ACS model as a reference measure instead. In particular, such a substitution would have a slight impact on the simulation algorithm described in Section~\ref{sec:simulation}: only the Hastings ratio should be modified by an extra multiplicative term.

In this paper, we focused on T-tessellations of a bounded domain. We expect the completely random T-tessellation model to be extensible to the whole plane. Concerning Gibbs variations, extensions to the whole plane must involve some extra conditions on the unnormalized density.

Another model for random T-tessellations is the STIT tessellation introduced by Nagel and Wei\ss\ \cite{nagel-2003, nagel-2005}. We conjecture that the STIT model can be seen as a Gibbs T-tessellation. However the derivation of its density with respect to the CRTT distribution remains an open problem. Obviously our simulation algorithm is not appropriate for simulating a STIT tessellation. Realizations of a STIT are obtained by series of splits (starting from the empty tessellation) generating nested T-tessellations. Modifying a nested tessellation by a flip may yield a non-nested tessellation. Even a modification of our algorithm consisting only in splits and merges would be inefficient. Given a nested T-tessellation, the only way to modify the first splitting segment (running across the whole domain) requires to remove all subsequent segments in reverse order. Fundamentally, STIT tessellations and the Gibbs T-tessellations considered here have different geometries.

A simulation algorithm of Metropolis-Hastings-Green type has been devised. This algorithm is based on three local modifications: split, merge and flip. Conditions ensuring the convergence (in total variation) of the simulation Markov chain were derived. Also geometric ergodicity of the Markov chain has to be investigated. Under geometric ergodicity and extra conditions (e.g.\ Lyapunov condition), central limit theorems can be obtained for averages of functionals based on Monte-Carlo samples of the Markov chain $\mathbf{T}_n$. In particular, geometric ergodicity has been proved for the Metropolis-Hastings-Green algorithm devised for simulating Gibbs point processes, see \cite{ref/156, geyer:_likel}. We tried to follow a similar approach without success. The main difficulty is related to the fact that, when one tries to empty a given T-tessellation, the number of operations (merges and flips) involved is not easily bounded.

The pioneering work of Arak, Clifford and Surgailis gave rise to developments by Schreiber, Kluszczy\'{n}ski and van Lieshout \cite{schreiber-2005, klu-2007, schreiber-2010} on polygonal Markov fields. Although the geometry of those polygonal fields differs from T-tessellations, it is of interest to notice that the simulation procedures proposed by these authors involve non-local updates based on so-called disagreement loops. It seems worthwhile to investigate whether similar non-local updates could be adapted for T-tessellations.

As mentioned in Example~\ref{ex:acs}, the Arak-Clifford-Surgailis model is able to produce planar graphs not only with T-vertices but also other types of vertices (X, I, L and Y). It remains an open problem how to extend the framework discussed here to a more general graph. Obviously, other operators than splits, merges and flips (as defined here) are required. 

A practical important issue is the inference of the model parameters. Since the likelihood is known up to an untractable normalizing constant and simulations can be done by a Metropolis-Hastings-Green algorithm, Monte-Carlo maximum likelihood \cite{geyer:_likel} can be considered. Another approach would consist into deriving a kind of pseudolikelihood similar to the one developped for point processes, see \cite{jensen, billiot}. As a starting point, one could use the Georgii-Nguyen-Zessin formulae for Gibbs T-tessellations derived in Proposition~\ref{prop:gnz:gibbs}.

\paragraph{Acknowledgements} \emph{This work has been presented in several
workshops such as the 6th and 7th French-Danish Workshops on Spatial
Statistics and Image Analysis for Biology (Skagen 2006 and Toulouse
2008), the 14th Workshop on Stochastic Geometry, Stereology and Image
Analysis (Neudietendorf 2007) and the first Journées de géométrie
aléatoire (Lille 2008). Our deepest gratitude to our colleagues (INRA-INRIA Payote network, \emph{Géométrie stochastique}  research group of University Lille 1,...) who
showed interest in our work. Special thanks to Jonas Kahn who
met the challenge of proving that the measure defining our completely
random T-tessellation is indeed finite.}  

\bibliographystyle{plain}
\bibliography{tess}

\appendix

\section{Proof of Theorem \ref{thm:exponential:moment}}
\label{appendix:exponential:moment}

Let $\TT\sim\mu$ and $F$ be a stable functional on $\cT$. There exists a real constant $K$ such that
\begin{equation*}
  \mean \sum_{T\in\cT(\mathbf{L})}F(T)^x \leq \mean \sum_{T\in\cT(\mathbf{L})}K^{x\nseint{T}}.
\end{equation*}
Note that the number of internal segments in $T$ is equal to the number $\mathbf{k}$ of lines in $\mathbf{L}$. It follows
\begin{equation*}
  \mean \sum_{T\in\cT(\mathbf{L})}F(T)^x \leq \mean \nttl{\mathbf{L}}K^{x\mathbf{k}}.
\end{equation*}
Using the upper bound provided by Equation \eqref{eq:tessellation:number}, one gets
\begin{equation*}
  \mean \sum_{T\in\cT(\mathbf{L})}F(T)^x \leq c_1 + \mean I\left\{\mathbf{k}\geq 2\right\}C^{\mathbf{k}} \left(\frac{\mathbf{k}}{(\log \mathbf{k})^{1-\epsilon}}\right)^{\mathbf{k}-\mathbf{k}/\log \mathbf{k}}K^{x\mathbf{k}}.
\end{equation*}
The constant $c_1$ is related to the sum terms when the number of lines $\mathbf{k}$ is less than $2$. As $F$ is finite, the constant $c_1$ is also finite.
Furthermore, since $\mathbf{L}$ is a Poisson line process with intensity 1, $\mathbf{k}$ is Poisson distributed with parameter $l(D)/\pi$. Let $c_2$ be the normalizing constant of this Poisson distribution. We have
\begin{eqnarray*}
  \mean \sum_{T\in\cT(\mathbf{L})}F(T)^x & \leq & c_1 + c_2
    \sum_{k\geq 2} \left(\frac{l(D)}{\pi}\right)^k \frac{1}{k!} C^{k} 
    \left(\frac{k}{(\log k)^{1-\epsilon}}\right)^{k-k/\log k}K^{xk},\\
  & \leq & c_1+c_2
    \sum_{k\geq 2} \left(\frac{CK^{x}l(D)}{\pi k}\right)^k 
    \left(\frac{k}{(\log k)^{1-\epsilon}}\right)^{k-k/\log k},\\
  & \leq & c_1+c_2
    \sum_{k\geq 2} \left(\frac{CK^{x}l(D)}{\pi(\log k)^{1-\epsilon} }\right)^k 
    \left(\frac{k}{(\log k)^{1-\epsilon}}\right)^{-k/\log k}.\\
\end{eqnarray*}
Now consider $\epsilon=1/2$. The term
\begin{equation*}
  \left(\frac{k}{(\log k)^{1/2}}\right)^{-k/\log k}
\end{equation*}
is upper bounded. Finally it is easy to check that the series
\begin{equation*}
  \sum_{k\geq 2} \left(\frac{CK^{x}l(D)}{\pi(\log k)^{1/2} }\right)^k 
\end{equation*}
is finite.

\section{Proof of Proposition \ref{prop:split:papangelou:mu}}
\label{appendix:proof:split:papangelou}
Let $\TT\sim\mu$ where $\mu$ is the distribution on $\cT$ defined by Equation~\eqref{eq:def:mu}. In view of Definition~\ref{def:split:campbell} and Equation~\eqref{eq:def:mu}, the split Campbell measure of $\TT$ can be written as
\begin{equation*}
  \CSplitMeas(\phi) = \mean \sum_{T\in\cT(\mathbf{L})} 
  \sum_{m\in\M{\TT}}\phi\left(m,T\setminus\{m\}\right).
\end{equation*}
The sum on non-blocking segments can be replaced by a sum over all lines $l$ in $\mathbf{L}$ combined with an indicator function. Below let $s(l,\TT)$ be the segment of $\TT$ lying on the line $l$ of $\mathbf{L}$.
\begin{equation*}
  \CSplitMeas(\phi) = \mean \sum_{T\in\cT(\mathbf{L})} 
  \sum_{l\in\mathbf{L}} \mathbf{1}_{\left\{s(l,T)\text{ is non-blocking}\right\}} \phi\left(s(l,T),T\setminus\{s(l,T)\}\right).
\end{equation*}
For a given line configuration $L$ and a given line $l\in L$, consider the map $T\mapsto \tilde{T}=T\setminus s(l,T)$. If $s(l,T)$ is non-blocking, $T\setminus s(l,T)$ is still a T-tessellation. Also note that $\tilde{T}\in\cT(L\setminus l)$. Furthermore, this map is not injective. Given a $\tilde{T}\in\cT(L\setminus l)$, the subset of tessellations $T$ such that $\tilde{T}=T\setminus s(l,T)$ is obtained by running through all cells $c$ of $\tilde{T}$ hitting the line $l$ and by splitting $\tilde{T}$ by the line segment $l\cap c$. Hence we have
\begin{eqnarray*}
  \lefteqn{
    \sum_{T\in\cT(\mathbf{L})} \mathbf{1}_{\left\{s(l,T)\text{ is non-blocking}\right\}}
    \phi\left(s(l,T),T\setminus\{s(l,T)\}\right)} \\
  & = &
  \sum_{\tilde{T}\in\cT(\mathbf{L}\setminus l)} \sum_{c\in C(\tilde{T})}\mathbf{1}_{\left\{l\cap c\neq\emptyset\right\}}
  \phi\left(l\cap c,\tilde{T}\right).
\end{eqnarray*}
It follows
\begin{equation*}
  \CSplitMeas(\phi) = \mean \sum_{l\in\mathbf{L}} \sum_{T\in\cT(\mathbf{L}\setminus l)} \sum_{c\in C(T)}
  \mathbf{1}_{\left\{l\cap c\neq\emptyset\right\}} \phi\left(l\cap c,T\right).
\end{equation*}
The right-hand side can be rewritten as
\begin{equation*}
  \mean \sum_{l\in\mathbf{L}} \psi\left(l,\mathbf{L}\setminus l\right).
\end{equation*}
Since $\mathbf{L}$ is a Poisson process on the space of lines, its Campbell measure has a simple decomposition with a Papangelou kernel equal to its intensity:
\begin{equation*}
  \mean \sum_{l\in\mathbf{L}} \psi\left(l,\mathbf{L}\setminus l\right) =
  \mean \int \psi\left(l,\mathbf{L}\right)\; dl.
\end{equation*}
Hence the split Campbell measure for $\TT$ can be written as
\begin{equation*}
  \CSplitMeas(\phi) = \mean \int \sum_{T\in\cT(\mathbf{L})} \sum_{c\in C(T)}
  \mathbf{1}_{\left\{l\cap c\neq\emptyset\right\}} \phi\left(l\cap c,T\right)\;dl.
\end{equation*}
In view of Equation~\eqref{eq:def:mu}, the right-hand side can be rewritten as
\begin{equation*}
  \CSplitMeas(\phi) = \mean \int \sum_{c\in C(\TT)}
  \mathbf{1}_{\left\{l\cap c\neq\emptyset\right\}} \phi\left(l\cap c,\TT\right)\;dl.
\end{equation*}
Finally, using the measure $dS$ on $\S{T}$ defined by Equation~\eqref{eq:def:dS}, the right-hand side is expressed as
\begin{equation*}
  \CSplitMeas(\phi) = \mean \int_{\S{\TT}} \phi\left(S,\TT\right)\;dS.
\end{equation*}
Proposition~\ref{prop:split:papangelou:mu} follows from the definition of the split Papangelou kernel.

\section{Proof of Proposition \ref{prop:flip:papangelou:mu}}
\label{appendix:proof:flip:papangelou}
Let $\TT\sim\mu$ where $\mu$ is the distribution on $\cT$ defined by Equation~\eqref{eq:def:mu}. In view of Definition~\ref{def:flip:campbell}, the flip Campbell measure of $\TT$ is defined by
\begin{equation*}
  \CFlipMeas(\phi) = \mean \sum_{T\in\cT(\mathbf{L})} 
  \sum_{F\in\F{T}}\phi(F^{-1},FT).
\end{equation*}
The double sum can be rewritten using the change of variables
\begin{equation*}
  (F,T) \mapsto \left(\tilde{F}=F^{-1},\tilde{T}=FT\right).
\end{equation*}
Note that a  flip does not change the lines supporting a T-tessellation: $L(T)=L(\tilde{T})$, i.e.\ $T\in\cT(L) \Leftrightarrow \tilde{T}\in\cT(L)$. Furthermore, the flip $\tilde{F}$ can be applied to $FT=\tilde{T}$: $\tilde{F}\in\F{\tilde{T}}$. Hence the flip Campbell measure can be expressed as follows
\begin{equation*}
  \CFlipMeas(\phi) = \mean \sum_{\tilde{T}\in\cT(\mathbf{L})} 
  \sum_{\tilde{F}\in\F{\tilde{T}}}\phi(\tilde{F},\tilde{T})
  = \mean \sum_{F\in\F{\TT}} \phi(F,\TT)
\end{equation*}
Therefore the flip Papangelou kernel equals $1$ for any pair $(T,F)$.

\section{Proof of Proposition \ref{prop:symmetric:measures}}
\label{appendix:proof:prop:symmetric:measures}
For a given $A\subset\Te_\text{sm}^2$, let
\begin{equation*}
  \check{A} = \left\{(T,\tilde{T}): (\tilde{T},T)\in A\right\}.
\end{equation*}
It must be shown that $\mu_\text{sm}(\check{A}) = \mu_\text{sm}(A)$. In view of Equation~\eqref{eq:def:sym:meas:sm},
\begin{equation*}
  \mu_\text{sm}(\check{A}) = \mean \int_{\mathcal{S}_{\TT}} 
  \mathbf{1}_A(S\TT,\TT)\;dS  + \mean \sum_{M\in\mathcal{M}_{\TT}} 
  \mathbf{1}_A(M\TT,\TT).
\end{equation*}
Consider the first term of the right-hand side and let $\phi(S,T)=\mathbf{1}_A(ST,T)$. Next use identity~\eqref{eq:alt:split:gnz:mu}. It can be checked that $\phi(M^{-1},MT)=\mathbf{1}_A(T,MT)$. Therefore
\begin{equation*}
  \mean \int_{\mathcal{S}_{\TT}} \mathbf{1}_A(S\TT,\TT)\;dS =
  \mean \sum_{M\in\mathcal{M}_{\TT}} \mathbf{1}_A(\TT,M\TT).
\end{equation*}
Now rewrite the second term of the right-hand side as
\begin{equation*}
  \mean \sum_{M\in\mathcal{M}_{\TT}} \mathbf{1}_A(M\TT,\TT) =
  \mean \sum_{M\in\mathcal{M}_{\TT}} \mathbf{1}_A(M\TT,M^{-1}M\TT). 
\end{equation*}
Use identity~\eqref{eq:alt:split:gnz:mu} with $\phi(M^{-1},MT)=\mathbf{1}_A(MT,M^{-1}MT)$. It can be checked that $\phi(S,T)=\mathbf{1}_A(T,ST)$. Therefore the second term of the right-hand side is equal to
\begin{equation*}
  \mean \int_{\mathcal{S}_{\TT}} \mathbf{1}_A(\TT,S\TT)\; dS.
\end{equation*}
Hence,
\begin{equation*}
  \mu_\text{sm}(\check{A}) = \mean \sum_{M\in\mathcal{M}_{\TT}} \mathbf{1}_A(\TT,M\TT) + 
  \mean \int_{\mathcal{S}_{\TT}} \mathbf{1}_A(\TT,S\TT)\; dS.
\end{equation*}
Compare with \eqref{eq:def:sym:meas:sm}.

\section{Proof of Proposition \ref{prop:irreducibility}}
\label{appendix:proof:prop:irreducibility}
Below it is proved that
under the conditions of Proposition~\ref{prop:irreducibility},
$\mathbf{T}_n$ is $\psi$-irreducible where $\psi$ is the measure on
$\Te$ defined by $\psi(B)=1$ if $B$ contains the empty tessellation $T_D$ and $\psi(B)=0$
otherwise. One needs to prove that for any T-tessellation $T$ there
exists an integer $m\geq 1$ such that
\begin{equation}
  \label{eq:def:irreducibility}
  \prob{\mathbf{T}_m=T|\mathbf{T}_0=T_D}>0.
\end{equation}
First, let us consider the case where $T\ne T_D$. In view of
Proposition~\ref{prop:smf:complet:1}, there exists a sequence
$(T_n)_{n=0,\ldots,m}$ of T-tessellations such that $T_n=T$,
$T_0=T_D$ and $T_{n+1}=U_{n}T_{n}$ where $U_n$ is either a merge or
a flip for any $n=0,\ldots,m-1$. Note that whenever $U_n$ is a
merge, it can be assumed to check condition
\eqref{eq:cond:1:irreducibility}. The probability
\eqref{eq:def:irreducibility} is greater than or equal to the
probability that
\begin{equation*}
  (\mathbf{T}_n)_{n=0,\ldots,m} = (T_n)_{n=0,\ldots,m}.
\end{equation*}
Since $\mathbf{T}_n$ is Markovian, the probability of the event
above is equal to the product of conditional probabilities
\begin{equation*}
  \prob{\mathbf{T}_{n+1}=U_nT_n|\mathbf{T}_n=T_n} =
  p_{\text{t}_n}(T_n)q_{\text{t}_n}(T_n,U_n)\min\{1,r_{\text{t}_n}(T_n,U_n)\},
\end{equation*}
where $\text{t}_n=\text{m}$ if $U_n$ is a merge and
$\text{t}_n=\text{f}$ if $U_n$ is a flip. If $U_n$ is a merge,
condition~\eqref{eq:cond:1:irreducibility} implies that
\begin{eqnarray*}
  p_{\mt}(T_n)q_{\mt}(T_n,U_n) & > & 0,\\
  r_{\mt}(T_n,U_n) & > & 0.
\end{eqnarray*}
If $U_n$ is a flip, condition~\eqref{eq:cond:2:irreducibility} for
$T=T_n$ and $F=U_n$ implies that
\begin{equation*}
  p_{\ft}(T_n)q_{\ft}(T_n,U_n) > 0,
\end{equation*}
while condition~\eqref{eq:cond:2:irreducibility} for $T=U_nT_n$ and
$F=U_n^{-1}$ implies that
\begin{equation*}
  r_{\ft}(T_n,U_n) > 0.
\end{equation*}
Therefore there exists a $m\geq 1$ such that
\eqref{eq:def:irreducibility} holds for any T-tessellation $T\neq
T_D$.

Now let us consider the case where $T=\mathbf{T}_0=T_D$. If either
$p_\mt(T_D)>0$ or $p_\ft(T_D)>0$, a merge or a flip of
$\mathbf{T}_0$ is considered with a probability greater than zero.
However no such update is applicable to $T_D$ and
$\mathbf{T}_1=\mathbf{T}_0=T_D$ almost surely. Therefore the
probability that $\mathbf{T}_1=T_D$ is not zero. If both
$p_\mt(T_D)=p_\ft(T_D)=0$, a split of $\mathbf{T}_0$ is considered.
If rejected, $\mathbf{T}_1=T_D$, otherwise $\mathbf{T}_1$ is a
tessellation with $2$ cells and as shown above there exists a finite
integer $m$ such that the probability that the Markov chain reaches
the empty tessellation $T_D$ starting from $\mathbf{T}_1$ is
positive.

\end{document}